\newcommand{\appref}[1]{\hyperref[#1]{Appendix~\ref{#1}}}
\newcommand\emitem[1]{\item{\itshape #1}\\}		%  for small properties inside a big proof.
\newcommand \e {\hfill {\tiny $\blacksquare$}}		%  for end of the small proof
\let\doendproof\endproof
\renewcommand\endproof{\hfill\qed\doendproof}
\setlist[1]{itemsep=0.5em}		% change space between items
\title
{
Linearly $\chi$-Bounding $(P_6,C_4)$-Free Graphs\thanks{An extended abstract of this paper appeared in the proceedings of WG 2017.}
}
\author{Serge Gaspers\inst{1,2}\and Shenwei Huang\inst{3}\thanks{Corresponding author (shenweihuang@nankai.edu.cn). }}
\institute{
School of Computer Science and Engineering\\
UNSW Sydney, Sydney 2052, Australia\\
\and Decision Sciences, Data61, CSIRO, Sydney 2052, Australia.\\
\and College of Computer Science, Nankai University, Tianjin 300071, China. \\
}
\date{}
\begin{document}

\maketitle

\begin{abstract}
Given two graphs $H_1$ and $H_2$, a graph $G$ is $(H_1,H_2)$-free if it contains no induced subgraph isomorphic to $H_1$ or $H_2$. 
Let $P_t$ and $C_s$ be the path on $t$ vertices and the cycle on $s$ vertices, respectively.
In this paper we show that for any $(P_6,C_4)$-free graph $G$
it holds that $\chi(G)\le \frac{3}{2}\omega(G)$, where $\chi(G)$ and $\omega(G)$ are the chromatic number
and clique number of $G$, respectively. 
Our bound is attained by several graphs, for instance,
the five-cycle,  the Petersen graph, the Petersen graph with an additional universal vertex, and
all $4$-critical $(P_6,C_4)$-free graphs other than $K_4$ (see \cite{HH17}).
The new result unifies previously known results
on the existence of linear $\chi$-binding functions for several graph classes.
Our proof is based on a novel structure theorem on $(P_6,C_4)$-free graphs
that do not contain clique cutsets. Using this structure theorem we also design a polynomial time $3/2$-approximation 
algorithm for coloring  $(P_6,C_4)$-free graphs. 
Our algorithm computes a coloring with $\frac{3}{2}\omega(G)$ colors for any $(P_6,C_4)$-free graph $G$ in $O(n^2m)$ time.
\end{abstract}

%%%%%%%%%%%%%%%%%%%%%%%%%%%%%%
\section{Introduction}

All graphs in this paper are finite and simple.
We say that a graph $G$ {\em contains} a graph $H$ if $H$ is
isomorphic to an induced subgraph of $G$.  A graph $G$ is
{\em $H$-free} if it does not contain $H$. 
For a family of graphs $\mathcal{H}$,
$G$ is {\em $\mathcal{H}$-free} if $G$ is $H$-free for every $H\in \mathcal{H}$.
In case that $\mathcal{H}$ consists of two graphs, we  write
$(H_1,H_2)$-free instead of $\{H_1,H_2\}$-free.
As usual, let $P_t$ and $C_s$ denote
the path on $t$ vertices and the cycle on $s$ vertices, respectively. The complete
graph on $n$ vertices is denoted by $K_n$.
For two graphs $G$ and $H$, we use $G+H$ to denote the \emph{disjoint union} of $G$ and $H$.
The \emph{join} of $G$ and $H$, denoted by $G\vee H$, is the graph obtained by taking the disjoint union
of $G$ and $H$ and adding an edge between every vertex in $G$ and every vertex in $H$.
For a positive integer $r$, we use $rG$ to denote the disjoint union of $r$ copies of $G$.
The \emph{complement} of $G$ is denoted by $\overline{G}$.
The \emph{girth} of $G$ is the length of the shortest cycle in $G$.
A \emph{$q$-coloring} of a graph $G$ is a function $\phi:V(G)\longrightarrow \{ 1, \ldots ,q\}$ such that
$\phi(u)\neq \phi(v)$ whenever $u$ and $v$ are adjacent in $G$.
The \emph{chromatic number} of a graph $G$, denoted by
$\chi (G)$, is the minimum number $q$ for which there exists a $q$-coloring of $G$.
The \emph{clique number} of $G$, denoted by $\omega(G)$, is the size of a largest clique in $G$.
Obviously, $\chi(G)\ge \omega(G)$ for any graph $G$.

A family $\mathcal{G}$ of graphs is said to be \emph{$\chi$-bounded} if 
there exists a function $f$ such that for every graph
$G\in \mathcal{G}$ and every induced subgraph $H$ of $G$ it holds that
$\chi(H)\le f(\omega(H))$. The function $f$ is called a \emph{$\chi$-binding} function
for $\mathcal{G}$. The class of perfect graphs (a graph $G$ is \emph{perfect} if for every induced subgraph $H$ of $G$ 
it holds that $\chi(H)=\omega(H)$), for instance, is a  $\chi$-bounded family with $\chi$-binding
function $f(x)=x$. 
Therefore, $\chi$-boundedness is a generalization of perfection.
The notion of $\chi$-bounded families was introduced by Gy{\'a}rf{\'a}s \cite{Gy87} who posed the following two 
meta problems:

\begin{itemize}
\item [$\bullet$] Does there exist a $\chi$-binding function $f$ for a given family $\mathcal{G}$ of graphs?
\item [$\bullet$] Does there exist a \emph{linear} $\chi$-binding function $f$ for $\mathcal{G}$?
 \end{itemize} 

The two problems have received considerable attention for hereditary classes.
Hereditary classes are exactly those classes that can be characterized by \emph{forbidden induced subgraphs}.
What choices of forbidden induced subgraphs guarantee that a family of graphs is $\chi$-bounded? 
Since there are graphs with arbitrarily large chromatic number
and girth \cite{Er59}, at least one forbidden subgraph has to be acyclic.
Gy{\'a}rf{\'a}s \cite{Gy73} conjectured that this necessary condition is also a
sufficient condition for a hereditary class to be $\chi$-bounded.

\begin{conjecture}[Gy{\'a}rf{\'a}s \cite{Gy73}]
For every forest $T$, the class of $T$-free graphs is $\chi$-bounded.
\end{conjecture}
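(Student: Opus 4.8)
The plan is to reduce the conjecture to the case of a single connected tree and then attack that case by induction on the clique number. For the reduction, note that every forest \(F\) embeds as an \emph{induced} subgraph of some tree \(T\): simply connect the components of \(F\) by sufficiently long paths through brand-new internal vertices, which creates no new adjacencies among the original vertices. Then every \(F\)-free graph is \(T\)-free, so the class of \(F\)-free graphs is contained in the class of \(T\)-free graphs, and any \(\chi\)-binding function for the latter restricts to the former. Hence it suffices to fix a tree \(T\) and exhibit a function \(f_T\) with \(\chi(G)\le f_T(\omega(G))\) for every \(T\)-free \(G\); I would try to establish this by induction on \(\omega(G)\), the base case being \(\omega=2\), i.e. triangle-free \(T\)-free graphs.

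For the inductive step I would exploit two ingredients. First, for any vertex \(v\) the neighborhood \(G[N(v)]\) is again \(T\)-free (induced subgraphs inherit \(T\)-freeness) and has clique number at most \(\omega(G)-1\), so by the induction hypothesis \(\chi(G[N(v)])\le f_T(\omega(G)-1)\); this controls the coloring \emph{locally}. Second, and this is where the shape of \(T\) must do real work, one needs to glue these local bounds into a global one. The natural device is a breadth-first-search layering from an arbitrary root: in a connected \(T\)-free graph the absence of the branching, acyclic pattern \(T\) should prevent colors from proliferating across the BFS layers, so that a bounded number of consecutive layers suffices to absorb all conflicts. For the special case \(T=P_t\) this layering argument can be pushed through and yields Gy\'arf\'as's classical (exponential) bound, and for trees of radius two it is also known; these are the templates I would imitate.

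The main obstacle is that the neighborhood induction alone collapses exactly at the base case and therefore settles nothing by itself: in a triangle-free graph every neighborhood is an independent set, so \(\chi(G[N(v)])=1\) while \(\chi(G)\) can be arbitrarily large. Thus all the difficulty is concentrated in the triangle-free case, where the forbidden tree \(T\) must single-handedly force a bounded chromatic number, and in the companion problem of making the BFS gluing step work for an \emph{arbitrary} tree rather than a path. No general mechanism is known that converts "no induced \(T\)" into a finite \(\chi\)-binding function for every tree \(T\); the layering and subdivision techniques of the Scott--Seymour program give polynomial bounds only for restricted families of trees. Consequently I would expect the scheme above to reduce the statement cleanly to a sharp structural claim about triangle-free \(T\)-free graphs, but closing that gap in full generality is precisely the long-standing open heart of the conjecture, and I do not expect the routine steps to dispose of it.
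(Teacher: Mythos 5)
The statement you were asked to prove is not a theorem of this paper at all: it is Gy{\'a}rf{\'a}s's conjecture from 1973, which the paper merely cites as motivation and which remains open. The paper proves a $\chi$-binding result only for the very special class of $(P_6,C_4)$-free graphs; it contains no proof of the conjecture, so there is nothing to compare your attempt against except the literature.

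Your proposal, correspondingly, is not a proof, and to your credit you say so explicitly. The routine parts are correct: embedding a forest $F$ as an induced subgraph of a tree $T$ by joining components with long paths, and noting that every $F$-free graph is then $T$-free, is a valid and standard reduction to the connected case; likewise $G[N(v)]$ is $T$-free with $\omega(G[N(v)])\le\omega(G)-1$, so induction on the clique number gives local control. But, as you yourself identify, this induction is vacuous at its base: in a triangle-free graph every neighborhood is an independent set, so the local bound says nothing, and the entire difficulty is concentrated in bounding $\chi$ for triangle-free $T$-free graphs and in making the layering argument work for an arbitrary tree rather than a path. The BFS-layering template you cite is known to succeed for $T=P_t$ (Gy{\'a}rf{\'a}s's exponential bound, which this paper quotes) and for trees of radius two (Gy{\'a}rf{\'a}s--Szemer{\'e}di--Tuza, Kierstead--Penrice), but no mechanism is known that handles all trees; that missing gluing step is precisely the open heart of the conjecture, not a gap that routine effort can close. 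So the verdict is: a genuine, correctly diagnosed gap --- your plan reduces the conjecture to the place where it is known to be hard and stops there, which is the honest outcome, since neither this paper nor anyone else has a proof.
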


Gy{\'a}rf{\'a}s \cite{Gy87} proved the conjecture for $T=P_t$: every $P_t$-free graph
$G$ has $\chi(G)\le (t-1)^{\omega(G)-1}$.  Note that this $\chi$-binding function is exponential
in $\omega(G)$. Therefore, it is natural to ask whether there exists a linear $\chi$-binding function
for $P_t$-free graphs. Unfortunately, unless $t\le 4$ in which case every $P_t$-free graph is perfect
and hence has $\chi(G)=\omega(G)$, no linear $\chi$-binding function exists for $P_t$-free graphs when $t\ge 5$ 
\cite{FGMT95}. In fact, as observed in \cite{RS04}, the class of $H$-free graphs admits a linear $\chi$-binding function
if and only if $H$ is contained in a $P_4$.

However, if an additional graph is forbidden, then the class could become linearly $\chi$-bounded again.
Choudum, Karthick and Shalu \cite{CKS07} derived a linear $\chi$-binding function for
$(P_6,P_4\vee P_1)$-free graphs, $(P_5,P_4\vee P_1)$-free graphs and $(P_5,C_4\vee P_1)$-free graphs. 
In the same paper, they also obtained the optimal $\chi$-binding function $f(x)=\lceil \frac{5}{4}x \rceil$ for $(P_5,C_4)$-free graphs,
improving a result in \cite{FGMT95}. Later on, the same set of authors \cite{CKS08} obtained linear $\chi$-binding functions
for certain subclasses of $3P_1$-free graphs (thus subclasses of $P_5$-free graphs). In particular, they showed that
the class of $(3P_1, K_4+P_1)$-free graphs has a linear $\chi$-binding function $f(x)=2x$.
Henning, L\"{o}wenstein and Rautenbach \cite{HLR12} 
obtained an improved $\chi$-binding function $f(x)=\frac{3}{2}x$ for $(3P_1, K_4+P_1)$-free graphs.

An important subclass of $P_5$-free graphs is the class of $2P_2$-free graphs. It was known that
for any  $2P_2$-free graph it holds that $\chi\le \binom{\omega+1}{2}$ \cite{Wa80}. 
For a slightly larger class, namely $P_2+P_3$-free graphs, Bharathi and Choudum \cite{BC16} gave an $O(\omega^3)$
bound on $\chi$. 
Brause, Randerath, Schiermeyer and Vumar \cite{BRSV16} recently showed that 
$(P_5, butterfly)$-free graphs and $(P_5, hammer)$-free graphs, 
both of which are superclasses of $2P_2$-free graphs due to a recent structural result \cite{DSM16},
admit cubic and quadratic $\chi$-binding functions, respectively, where
a \emph{butterfly} is a graph isomorphic to $2P_2\vee P_1$ and a \emph{hammer}
is a graph on five vertices $\{a,b,c,d,e\}$ where $a,b,c,d$ in this order induces a $P_4$
and $e$ is adjacent to $a$ and $b$. 
It is not known whether any of these $\chi$-binding functions can be improved to linear.
Very recently, a linear $\chi$-binding function has been shown to exist for $(2P_2,H)$-free graphs when $H$
is one of $(P_1+P_2)\vee P_1$ (usually referred to as \emph{paw}), 
$P_4\vee P_1$ (usually referred to as \emph{gem})
 or $\overline{P_5}$ (usually referred to as \emph{house}) \cite{BRSV16}.
When $H$ is isomorphic to $C_4$, it was known \cite{BHPZ93} that
every such graph has $\chi\le \omega+1$; when $H$ is $P_2\vee 2P_1$
(usually referred to as \emph{diamond}),
it was known that $\chi\le \omega+3$. This bound in fact holds
for $(P_2+P_3,diamond)$-free graphs \cite{BC16}.
For more results on $\chi$-binding functions, we refer to a survey by Randerath and Schiermeyer \cite{RS04}.

\noindent {\bf Our Contributions.}
In this paper, we prove that $f(x)=\frac{3}{2}x$ is a $\chi$-binding function for $(P_6,C_4)$-free graphs.
This unifies several previous results
on the existence of linear $\chi$-binding functions for, e.g., $(2P_2,C_4)$-free graphs \cite{BHPZ93},
$(P_5,C_4)$-free graphs \cite{CKS07} and $(P_3+P_2,C_4)$-free graphs \cite{CK10}.
The graphs $C_5$, the Petersen graph, the Petersen graph with an additional universal vertex, and
all $4$-critical $(P_6,C_4)$-free graphs other than $K_4$ (see \cite{HH17}) show that our $\chi$-binding function
is optimal. 
On the other hand, there is an active research on classifying the complexity of coloring $(H_1,H_2)$-free graphs.
Despite much effort, the classification is far from being complete, see \cite{GolovachJPS16} for a summary of partial results.
The class of $(P_6,C_4)$-free graphs is one of the unknown cases.
(Note that the class of $(P_6,C_4)$-free graphs has unbounded clique-width \cite{DP16} and so we cannot
directly use the algorithm of Kobler and Rotics \cite{KR03} to conclude that coloring can be solved in polynomial
time for $(P_6,C_4)$-free graphs.)
Here we develop an $O(n^2m)$ 3/2-approximation algorithm for coloring $(P_6,C_4)$-free graphs.
This is the first approximation algorithm for coloring these graphs and could be viewed as a first step
towards a possible polynomial time algorithm for optimally coloring these graphs. 

The remainder of the paper is organized as follows. We present some preliminaries in \autoref{sec:pre}
and useful properties for $(P_6,C_4)$-free graphs that contain a $C_5$ in \autoref{sec:C5}.
We then prove a novel structure theorem for $(P_6,C_4)$-free graphs without clique cutsets in \autoref{sec:structure}.
Using this theorem we show in  \autoref{sec:3/2} that every $(P_6,C_4)$-free graph has chromatic number at most
$3/2$ its clique number. Finally, we turn our proof into a 3/2-approximation algorithm in \autoref{sec:alg}.

%%%%%%%%%%%%%%%%%%%%%%%%%%%%%%
\section{Preliminaries}\label{sec:pre}

For general graph theory notation we follow \cite{BM08}.
Let $G=(V,E)$ be a graph. 
The \emph{neighborhood} of a vertex $v$, denoted by $N_G(v)$, is the set of neighbors of $v$.
For a set $X\subseteq V(G)$, let $N_G(X)=\bigcup_{v\in X}N_G(v)\setminus X$.
The \emph{degree} of $v$, denoted by $d_G(v)$, is equal to $|N_G(v)|$.
For $x\in V$ and $S\subseteq V$, we denote by $N_S(x)$ the set of neighbors of $x$ that are in $S$,
i.e., $N_S(x)=N_G(x)\cap S$.
For $X,Y\subseteq V$, we say that $X$ is \emph{complete} (resp. \emph{anti-complete}) to $Y$
if every vertex in $X$ is adjacent (resp. non-adjacent) to every vertex in $Y$.
If $X=\{x\}$, we write ``$x$ is complete (resp. anti-complete) to $Y$'' instead of ``$\{x\}$ is is complete (resp. anti-complete) to $Y$''.
A vertex subset $K\subseteq  V$ is a \emph{clique cutset} if $G-K$ has more components than $G$ and $K$
induces a clique. A vertex is \emph{universal} in $G$ if it is adjacent to all other vertices.
For $S\subseteq V$, the subgraph \emph{induced} by $S$, is denoted by $G[S]$.
A subset $M\subseteq V$ is a \emph{dominating set} if every vertex not in $M$ has a neighbor in $M$.
We say that $M$ is a \emph{module} if every vertex not in $M$ is either complete or anti-complete to $M$.

Let $u,v\in V$. We say that $u$ and $v$ are \emph{twins} if $u$ and $v$ are adjacent
and they have the same set of neighbors in $V\setminus \{u,v\}$. Note that the binary relation
of being twins on $V$ is an equivalence relation and so $V$ can be partitioned into equivalence 
classes $T_1,\ldots, T_r$ of twins. The \emph{skeleton} of $G$ is the subgraph induced by 
a set of $r$ vertices, one from each of $T_1,\ldots,T_r$.
A \emph{blow-up} of a graph $G$ is a graph $G'$ obtained by replacing each vertex $v$ of $G$
with a clique $K_v$  of size at least $1$ such that $K_v$ and $K_u$ are complete in $G'$ if $u$ and $v$ are adjacent in $G$,
and anti-complete otherwise. Since each equivalence class of twins is a clique and any two equivalence classes
are either complete or anti-complete, every graph is a blow-up of its skeleton.

A graph is \emph{chordal} if it does not contain any induced cycle of length at least four.
The following structure of $(P_6,C_4)$-free graphs discovered by Brandst{\"a}dt and Ho\`{a}ng \cite{BH07} is of particular
importance in our proofs below.
\begin{lemma}[Brandst{\"a}dt and Ho\`{a}ng \cite{BH07}]\label{lem:P6C4atom}
Let $G$ be a $(P_6,C_4)$-free graph without clique cutsets.  Then the following statements hold:
\begin{inparaenum}[(i)]
\item every induced $C_5$ is dominating;
\item If $G$ contains an induced $C_6$ which is not dominating, $G$ is the join of
a blow-up of the Petersen graph (\autoref{fig:counterexample}) and a (possibly empty) clique.
\end{inparaenum} 
\end{lemma}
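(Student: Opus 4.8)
My plan is to reduce everything to the way a single vertex attaches to a short cycle under $C_4$-freeness, and then to exploit the absence of clique cutsets to produce either an induced $P_6$ (ruling out a bad configuration) or the rigid Petersen pattern. The first step is a \emph{local attachment fact}: if $C=v_1v_2v_3v_4v_5$ is an induced $C_5$ and $x\notin V(C)$, then $C_4$-freeness forces $N_C(x)$ to be one of $\emptyset$, a single vertex, two consecutive vertices, three consecutive vertices, or all of $V(C)$; indeed, if $x$ is adjacent to two vertices at distance two on $C$ but not to the vertex between them, those four vertices induce a $C_4$. The same reasoning gives a companion fact I will use repeatedly: if $s_1\not\sim s_2$ and there are two non-adjacent vertices $a,b$ with $a,b\in N(s_1)\cap N(s_2)$, then $\{s_1,a,s_2,b\}$ induces a $C_4$; hence two non-adjacent vertices share at most an edge's worth of common neighbours.

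For part (i), suppose some induced $C_5$, say $C$, fails to dominate $G$, and let $F$ be a component of $G-N[V(C)]$. Its boundary $S:=N(F)$ consists only of vertices having a neighbour on $C$, and $S$ separates $F$ from $V(C)$; since $G$ has no clique cutset, $S$ is not a clique, so I can fix non-adjacent $s_1,s_2\in S$ together with a shortest path $s_1f_1\cdots f_ts_2$ whose interior lies in $F$ (so $t\ge 1$) and is anti-complete to $C$. The goal is to turn this into an induced $P_6$. If $t=1$, then $s_1,s_2$ have a common neighbour $f_1\in F$, and by the companion fact they cannot also share a neighbour $v$ on $C$ (else $\{s_1,v,s_2,f_1\}$ is a $C_4$); combined with the attachment fact this lets me pick $v_i\in N_C(s_1)$ and $v_j\in N_C(s_2)$ that are adjacent neither to the other $s$ nor to each other, and then walk along $C$ to extend $v_is_1f_1s_2v_j$ to six vertices. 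If $t\ge 2$, the path $v_i,s_1,f_1,\dots,f_t,s_2,v_j$ is already long enough, and its only possible chords, namely $v_iv_j$, $v_is_2$, and $v_js_1$, are again avoided by choosing the attachment endpoints via the attachment fact. In each case I obtain an induced $P_6$, a contradiction, so $C$ must dominate.

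For part (ii), I would run the same ``undominated component plus non-clique boundary'' machinery on the non-dominating induced $C_6$, now reading off the admissible neighbourhoods on $C_6$ from $C_4$-freeness. Because part (i) already forbids any non-dominating $C_5$, the long induced paths excluded by $P_6$-freeness force the attachments of $F$ into an antipodal pattern around the $C_6$. The aim is then to show that, after pulling out the set $U$ of universal vertices (which becomes the clique factor of the join), the remaining graph is forced edge by edge to be a blow-up of the Petersen graph: I would compute the twin-classes of $G-U$, argue there are exactly ten of them, and verify that the skeleton reproduces the Petersen adjacencies, each required edge being present because its absence would create an induced $P_6$ through the $C_6$, and each forbidden edge being absent because its presence would create an induced $C_4$ or a dominating $C_5$ colliding with part (i).

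The main obstacle is precisely this reconstruction in part (ii): translating purely local constraints on neighbourhoods of one $C_6$ into the exact, globally rigid Petersen adjacency, and proving that nothing other than a Petersen blow-up joined to a clique can occur. Establishing the attachment fact and part (i) is routine once the $C_4$-avoidance observations are in hand, but certifying that the ten twin-classes close up into the Petersen graph, with no missing and no extra edges, is the delicate, case-heavy heart of the argument, and is where I expect the bulk of the work to lie.
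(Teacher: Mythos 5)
This lemma is not proved in the paper at all: it is quoted from Brandst\"adt and Ho\`ang \cite{BH07}, so there is no internal proof to compare against and your attempt has to stand on its own. Judged that way, your part (i) has the right strategy (no clique cutset forces two non-adjacent vertices $s_1,s_2$ in $N(F)$ for a non-dominated component $F$, then manufacture an induced $P_6$), but the concrete construction does not cover all cases. Take $t=1$ with $N_C(s_1)=\{1,2,3\}$ and $N_C(s_2)=\{4,5\}$; this is consistent with your attachment fact, with $N_C(s_1)\cap N_C(s_2)=\emptyset$, and one checks it is $C_4$-free. Every admissible pair $(v_i,v_j)$, namely $(1,4),(2,4),(2,5),(3,5)$, gives an induced $P_5=v_i,s_1,f_1,s_2,v_j$ that cannot be extended along $C$: each $C$-neighbour of either endpoint is adjacent to $s_1$ or to $s_2$. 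The configuration is indeed impossible, but the witnessing path is $f_1,s_2,5,1,2,3$ --- a $P_6$ with $f_1$ as an \emph{endpoint} that wraps around $C$, not one with $s_1,f_1,s_2$ in the middle. Likewise, for $t\ge 2$ your claim that the chords $v_is_2$, $v_js_1$, $v_iv_j$ ``are avoided by choosing the attachment endpoints'' silently assumes such choices exist; but for $t\ge 2$ nothing forbids $N_C(s_1)=N_C(s_2)$ (the resulting induced cycle through $F$ has length $t+3\ge 5$, which is legal), and then no valid $v_i$ exists at all --- again the $P_6$ must be found by walking along $C$ away from the common attachment, e.g.\ $3,2,1,s_1,f_1,f_2$ when $N_C(s_1)=N_C(s_2)=\{1\}$. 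So part (i) is repairable, but as written the case analysis is wrong in identifiable configurations.

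The decisive gap is part (ii), which is a plan rather than a proof. You say you ``would'' run the same machinery, ``would'' compute the twin classes and verify ten of them with Petersen adjacency, and you yourself identify this reconstruction as ``the delicate, case-heavy heart of the argument \ldots{} where I expect the bulk of the work to lie.'' Nothing in the proposal establishes any of it: that the vertices not dominated by the $C_6$ force exactly the three antipodal attachment classes $S(i,i+3)$ plus a hub, that the universal vertices are precisely the clique factor of the join, that every non-universal vertex falls into one of ten twin classes, or that no edge can be added or removed from the Petersen pattern. (A small but telling slip: you propose to forbid edges because they ``would create \ldots{} a dominating $C_5$ colliding with part (i)''; part (i) asserts every induced $C_5$ \emph{is} dominating, so only a non-dominating $C_5$ would collide with it.) Since the Petersen conclusion of part (ii) is exactly what the paper leans on later (the non-dominating $C_6$ case of \autoref{lem:C6}), the proposal cannot be accepted as a proof of the statement as it stands.
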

%%%%%%%%%%%%%%%%%%%%%%%%%%%%%%%%%%%%%

\section{Structural Properties Around a $C_5$}\label{sec:C5}

In this section, we present structural properties of $(P_6,C_4)$-free graphs that contain a $C_5$.
Let $G=(V,E)$ be a graph and $H=1,2,\ldots,s,1$ be an induced cycle of $G$.
We partition $V\setminus V(H)$ into subsets with respect to $H$ as follows:
for any $X\subseteq V(H)$, we denote by $S(X)$ the set of vertices
in $V\setminus V(H)$ that have $X$ as their neighborhood among $V(H)$, i.e.,
\[S(X)=\{v\in V\setminus V(H): N_{V(H)}(v)=X\}.\]
For $0\le j\le |V(H)|$, we denote by $S_j$ the set of vertices in $V\setminus V(H)$ that have exactly $j$
neighbors among $V(H)$. Note that $S_j=\bigcup_{X\subseteq V(H): |X|=j}S(X)$.
We say that a vertex in $S_j$ is a \emph{$j$-vertex}. 
For simplicity, we shall write $S(1,2)$ for $S(\{1,2\})$ and $S(1,2,3)$ for $S(\{1,2,3\})$, etc.

Let $G$ be a $(P_6,C_4)$-free graph that contains an induced cycle $C=1,2,3,4,5$.
We partition $V(G)$ with respect to $C$. Then the following holds.
\begin{enumerate}[label=\bfseries (P\arabic*)]

\emitem {$V(G)=V(C)\cup  S_1\cup \bigcup_{i=1}^{5}S(i,i+1)\cup \bigcup_{i=1}^{5}S(i-1,i,i+1)\cup S_5$}\label{item:partition}

Suppose that $x\in V(G)\setminus V(C)$. Note that if $x$ is adjacent to $i$ and $i+2$ but not to $i+1$,
then $\{x,i,i+1,i+2\}$ induces a $C_4$. Thus, \ref{item:partition} follows. \e

\emitem {$S_5\cup S(i-1,i,i+1)$ is a clique.}\label{item:s5}

Suppose not. Let $u$ and $v$ be two non-adjacent vertices in $S_5\cup S(i-1,i,i+1)$.
Then $\{u,i-1,v,i+1\}$ induces a $C_4$, a contradiction.
\e

\emitem {$S(i-1,i,i+1)$ is anti-complete to $S(i+1,i+2,i+3)$.}\label{item:s3}

By symmetry, it suffices to prove \ref{item:s3} for $i=1$. Suppose that $u\in S(5,1,2)$
is adjacent to $v\in S(2,3,4)$. Then $\{5,4,v,u\}$ induces a $C_4$. \e

\emitem {$S(i,i+1)$ is complete to $S(i+1,i+2)$ and anti-complete to $S(i+3,i+4)$.
Moreover, if both $S(i,i+1)$ and $S(i+1,i+2)$ are not empty, then both sets are cliques.}\label{item:s2}

It suffices to prove \ref {item:s2} for $i=1$. Suppose that $u\in S(1,2)$ is not adjacent to $v\in S(2,3)$. Then
$u,1,5,4,3,v$ induces a $P_6$, a contradiction.  If $S(1,2)$ and $S(2,3)$ are not empty, then it follows from
the $C_4$-freeness of $G$ that both sets are cliques.
Similarly, if  $u\in S(1,2)$ is adjacent to $w\in S(3,4)$, then $\{2,3,w,u\}$ induces a $C_4$.  
\e

\emitem {$S(i)$ is anti-complete to $S(i+1)$, and is complete to $S(i+2)$.}\label{item:s1}

It suffices to prove the statement for $i=1$. If $u\in S(1)$ is adjacent to $v\in S(2)$, then $\{1,2,v,u\}$ induces a $C_4$,
a contradiction. Similarly, if $u\in S(1)$ is not adjacent to $w\in S(3)$, then $u,1,5,4,3,w$ induces a $P_6$. \e

\emitem {$S(i-1,i,i+1)$ is anti-complete to $S(i-2,i+2)$.}\label{item:s3antics2}.

By symmetry, it suffices to prove for $i=1$. If $t\in S(5,1,2)$ is adjacent to $d\in S(3,4)$,
then $\{4,5,t,d\}$ induces a $C_4$. \e

\item{$S(i)$ is anti-complete to $S(i+1,i+2,i+3)$.}\label{item:s1antics3}

By symmetry, it suffices to prove for $i=1$. If $u\in S(1)$ is adjacent to $t\in S(2,3,4)$,
then $\{1,u,t,2\}$ induces a $C_4$. \e

\emitem {One of $S(i)$ and $S(i+1,i+2)$ is empty.}\label{item:S1S2empty}

By symmetry, it suffices to show this for $i=1$. Suppose that $u\in S(1)$
and $v\in S(2,3)$. Then either $u,1,5,4,3,v$ induces a $P_6$ or $\{u,1,2,v\}$
induces a $C_4$, depending on whether $u$ and $v$ are adjacent. This is a contradiction. \e

\emitem {$S(i-2,i+2)$ is anti-complete to $S(j)$ if $j\neq i$.}\label{item:s1s2}

By symmetry, it suffices to prove for $i=1$. Let $x\in S(3,4)$. If $x$ has a neighbor $u\in S(2)$,
then $\{2,3,u,x\}$ induces a $C_4$. If $x$ has a neighbor $u\in S(3)$, then $u,x,4,5,1,2$
induces a $P_6$. 
This shows that $S(3,4)$ is anti-complete to $S(j)$ for all $j\neq 1$. \e
\end{enumerate}

\section{The structure of $(P_6,C_4)$-free atoms}\label{sec:structure}

A graph without clique cutsets is called an \emph{atom}.
We say that a vertex $v$ in $G$ is \emph{small} if $d_G(v)\le \frac{3}{2}\omega(G)-1$.
Our main result in this section is the following.

\begin{theorem}\label{thm:main}
Let $G$ be a $(P_6,C_4)$-free atom. Then one of the following is true:

$\bullet$ $G$ contains a small vertex,

$\bullet$ $G$ contains a universal vertex,

$\bullet$ $G$ is a blow-up of the Petersen graph (see \autoref{fig:counterexample}),

$\bullet$ $G$ is a blow-up of the graph $F$ (see \autoref{fig:counterexample}).
\end{theorem}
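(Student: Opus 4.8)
The plan is to assume that $G$ has neither a small vertex nor a universal vertex and to deduce that $G$ is a blow-up of the Petersen graph or of the graph $F$ of \autoref{fig:counterexample} (the two remaining outcomes), organizing the argument by the shortest induced cycle. First I would reduce to the presence of a \emph{dominating} induced $C_5$. Since $G$ is $C_4$-free, if $G$ had no induced cycle of length at least $5$ it would be chordal, hence (being an atom) complete, hence have a universal vertex; and since a $P_6$-free graph contains no induced $C_k$ with $k\ge 7$, the graph $G$ must contain an induced $C_5$ or $C_6$. If some induced $C_6$ is non-dominating, \ref{lem:P6C4atom}(ii) presents $G$ as the join of a blow-up of the Petersen graph with a clique, and the absence of a universal vertex forces that clique to be empty, yielding the Petersen outcome. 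This leaves the main case of a dominating induced $C_5$, together with the residual possibility that $G$ has an induced (necessarily dominating) $C_6$ but no $C_5$; the latter I would treat by an analysis around the $C_6$ mirroring the $C_5$ case, which either produces a small vertex or identifies $G$ as a blow-up of $F$.

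For the main case I fix a dominating induced cycle $C=1,2,3,4,5$ and work with the partition and the adjacency constraints \ref{item:partition}--\ref{item:s1s2} of \autoref{sec:C5}. The quantitative engine is a pair of bounds run against one another. On the one hand, \ref{item:s5} and \ref{item:s2} manufacture large cliques: by \ref{item:s5}, $S_5$ together with any triple $S(i-1,i,i+1)$ and an edge $\{i-1,i\}$ of $C$ is a clique, so $\omega(G)\ge |S_5|+|S(i-1,i,i+1)|+2$; and by \ref{item:s2}, a vertex $i+1$ of $C$ together with two consecutive nonempty pairs $S(i,i+1),S(i+1,i+2)$ is a clique, so $\omega(G)\ge |S(i,i+1)|+|S(i+1,i+2)|+1$. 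On the other hand, the local properties \ref{item:s1}, \ref{item:s1antics3}, \ref{item:s1s2} pin down exactly which $S$-sets a chosen vertex can see. Combining these, I would argue that unless the sets a vertex is complete to are packed into cliques comparable to $\omega(G)$, its closed neighborhood has size at most $\tfrac32\omega(G)$, i.e.\ the vertex is small. The cleanest test candidates are a $1$-vertex in some $S(i)$ and the cycle vertices themselves, whose neighborhoods \ref{item:s1}--\ref{item:s1antics3} confine to only a few $S$-sets.

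To finish, I would show that the no-small-vertex hypothesis makes the configuration rigid. Property \ref{item:S1S2empty} already kills, for each $i$, one of $S(i)$ and $S(i+1,i+2)$, which severely limits how $1$-vertices and pairs coexist; feeding this into the clique bounds above forces most $S$-sets empty, and the anti-completeness relations \ref{item:s3}, \ref{item:s3antics2}, \ref{item:s1s2} among the survivors make the skeleton of $G$ isomorphic to the Petersen graph or to $F$. Since every graph is a blow-up of its skeleton, identifying the skeleton identifies $G$ up to blow-up, giving the last two outcomes; the girth-$5$, degree-$3$ rigidity of the Petersen graph is precisely what a $C_5$-based analysis produces.

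The main obstacle is the interaction of the two bounds across an exhaustive case analysis over which $S$-sets are nonempty: in each surviving configuration one must exhibit either a concrete small vertex or a concrete clique witnessing that $\omega(G)$ is large. The delicate point is that the completeness relation \ref{item:s2} among consecutive pairs is exactly what can raise $\omega(G)$ and thereby rescue an otherwise-large neighborhood, so keeping this bookkeeping consistent across all cases is where the real work lies—and it is the reason that these four families, and no others, survive.
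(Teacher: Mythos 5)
Your opening reductions are sound and agree with the paper: a chordal atom is complete; $P_6$-freeness excludes induced $C_k$ for $k\ge 7$; and a non-dominating induced $C_6$ yields the Petersen outcome through \autoref{lem:P6C4atom}(ii). But from that point on the proposal is a strategy statement, not a proof: the entire content of the theorem is the ``exhaustive case analysis over which $S$-sets are nonempty,'' which you explicitly defer (``where the real work lies'') and never carry out. The gap is not merely one of detail. Your quantitative engine --- playing clique lower bounds on $\omega(G)$ against neighborhood-confinement upper bounds --- only closes once one knows completeness relations such as ``$S(i-1,i,i+1)$ is complete to $S(i,i+1,i+2)$'' or ``each vertex of $S_5$ is complete or anti-complete to $S_2$,'' and such relations are false for an arbitrary $(P_6,C_4)$-free atom around an arbitrary $C_5$. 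The paper manufactures them by first eliminating two auxiliary graphs $F_1$ and $F_2$ (\autoref{lem:F1}, \autoref{lem:F2}), each via an extremal choice of the induced copy (maximizing $|S_2|$), by a minimality choice (minimizing $|S_3|$) in the $C_5$ analysis, and by clique-cutset arguments that use the atom hypothesis \emph{inside} the analysis, not only to invoke \autoref{lem:P6C4atom}. None of these devices, nor any substitute for them, appears in your plan.

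Second, your ordering of cases would make the missing analysis strictly harder than the paper's. You take the dominating $C_5$ as the main case and relegate ``$C_6$ but no $C_5$'' to a mirrored afterthought; the paper does the opposite --- it disposes of $C_6$ first (\autoref{lem:C6}, assuming $F_1$-freeness) precisely so that the $C_5$ analysis (\autoref{lem:C5}) can assume $C_6$-freeness, which is invoked repeatedly there to produce contradictions. In your organization, blow-ups of the Petersen graph and of $F$ both contain induced $C_5$'s and hence sit inside your main case, so that single case would have to reproduce both exceptional outcomes; your closing claim that the ``girth-$5$, degree-$3$ rigidity of the Petersen graph is precisely what a $C_5$-based analysis produces'' is the reverse of what actually happens --- in the paper the Petersen outcome arises only from a non-dominating $C_6$, and the $F$ outcome only from the dominating-$C_6$ case. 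Likewise, ``forces most $S$-sets empty'' is not how the argument concludes: in the surviving configurations (e.g., \autoref{fig:noS5}) many $S$-sets are nonempty, and the small vertex is found by counting degrees against the established clique structure, not by emptiness. So the proposal identifies the right coordinate system (the $S$-set partition and degree counting) but is missing the ideas that make the theorem true.
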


\begin{figure}[tb]
\centering
\begin{subfigure}{.5\textwidth}
\centering
\begin{tikzpicture}[scale=0.6]
\tikzstyle{vertex}=[draw, circle, fill=white!100, minimum width=4pt,inner sep=2pt]
%\draw[step=1cm,color=gray] (-5,-5) grid (5,5);

% Petersen
\node[vertex] (v1) at (-1.5,2) {};
\node[vertex] (v2) at (1.5,2) {};
\node[vertex] (v3) at (4,0) {};
\node[vertex] (v4) at (1.5,-2) {};
\node[vertex] (v5) at (-1.5,-2) {};
\node[vertex] (v6) at (-4,0) {};
\draw (v1)--(v2)--(v3)--(v4)--(v5)--(v6)--(v1);

\node[vertex] (v14) at (-1.5,-0.5) {};
\draw (v14)--(v1) (v14)--(v4);
\node[vertex] (v25) at (1.5,-0.5) {};
\draw (v25)--(v2) (v25)--(v5);
\node[vertex] (v36) at (0,1) {};
\draw (v36)--(v3) (v36)--(v6);
\node[vertex] (c) at (0,0) {};
\draw (c)--(v14) (c)--(v25) (c)--(v36);

\node at (0,-3) {The Petersen graph};
\end{tikzpicture}
\end{subfigure}%
\begin{subfigure}{.5\textwidth}
\centering
\begin{tikzpicture}[scale=0.6]
\tikzstyle{vertex}=[draw, circle, fill=white!100, minimum width=4pt,inner sep=2pt]
%\draw[step=1cm,color=gray] (-5,-5) grid (5,5);

% F3
\node[vertex] (u1) at (-1.5,2) {};
\node[vertex] (u2) at (1.5,2) {};
\node[vertex] (u3) at (4,0) {};
\node[vertex] (u4) at (1.5,-2) {};
\node[vertex] (u5) at (-1.5,-2) {};
\node[vertex] (u6) at (-4,0) {};
\draw (u1)--(u2)--(u3)--(u4)--(u5)--(u6)--(u1);

\node[vertex] (q6123) at (0,1) {};
\draw (q6123)--(u1)  (q6123)--(u2)  (q6123)--(u3)  (q6123)--(u6);
\node[vertex] (q2345) at (2,-0.5) {};
\draw (q2345)--(u2) (q2345)--(u3) (q2345)--(u4) (q2345)--(u5);
\node[vertex] (q4561) at (-2,-0.5) {};
\draw (q4561)--(u4)  (q4561)--(u5)  (q4561)--(u6)  (q4561)--(u1);

\node at (0,-3) {$F$};

\end{tikzpicture}
\end{subfigure}
\caption{Two smallest $(P_6,C_4)$-free atoms that do not contain any small vertex.}
\label{fig:counterexample}
\end{figure}
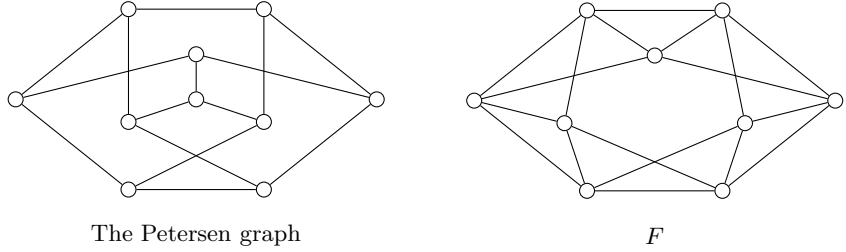

\noindent
To prove the above theorem, we shall prove a number of lemmas below.
The idea is that we assume the occurrence of some induced subgraph $H$ in $G$
and then argue that the theorem holds in this case. Afterwards, we can assume that
$G$ is $H$-free in addition to being $(P_6,C_4)$-free. We then pick a different induced subgraph as
$H$ and repeat. In the end, we are able to show that the theorem holds if $G$ contains a $C_5$ or $C_6$ 
(\autoref{lem:C6} and \autoref{lem:C5}). Therefore, the remaining case is that $G$ is chordal. In that case,
the theorem follows from a well-known fact \cite{Di61} that every chordal graph has a \emph{simplicial} vertex, that is, 
a vertex whose neighborhood
induces a clique. As straightforward as the approach sounds, the difficulty is that in order to eliminate $C_5$ and $C_6$
we have to eliminate two more special graphs $F_1$ and $F_2$ (\autoref{lem:F1} and \autoref{lem:F2}) and do it in the 
`right' order. We start with $F_1$.

%%%%%%%%%%%%%%%%%%%%%%
\begin{figure}[tb]
\centering
\begin{subfigure}{.5\textwidth}
\centering
\begin{tikzpicture}[scale=0.7]
\tikzstyle{vertex}=[draw, circle, fill=white!100, minimum width=4pt,inner sep=2pt]
%\draw[step=1cm,color=gray] (0,0) grid (10,10);

% F1
\node [vertex] (v1) at (0,3) {$1$};
\node [vertex] (v2) at (3,1) {$2$};
\node [vertex] (v3) at (1.5,-1) {$3$};
\node [vertex] (v4) at (-1.5,-1) {$4$};
\node [vertex] (v5) at (-3,1) {$5$};
\draw 
(v1)--(v2)--(v3)--(v4)--(v5)--(v1);

\node[vertex] (y) at (1.5,0.5) {$y$};
\draw (v2)--(y) (v3)--(y);

\node[vertex] (z) at (-1.5,0.5) {$z$};
\draw (v4)--(z) (v5)--(z);

\node [vertex] (x) at (0,0) {$x$};
\draw (x)--(v3)   (x)--(v4)  (x)--(y)  (x)--(z);

\node at (0,-2) {$F_1$};
\end{tikzpicture}
\end{subfigure}%
\begin{subfigure}{.5\textwidth}
\centering
\begin{tikzpicture}[scale=0.7]
\tikzstyle{vertex}=[draw, circle, fill=white!100, minimum width=4pt,inner sep=2pt]

% F2
\node [vertex] (v1) at (0,3) {$1$};
\node [vertex] (v2) at (3,1) {$2$};
\node [vertex] (v3) at (1.5,-1) {$3$};
\node [vertex] (v4) at (-1.5,-1) {$4$};
\node [vertex] (v5) at (-3,1) {$5$};
\draw 
(v1)--(v2)--(v3)--(v4)--(v5)--(v1);

\node[vertex] (y) at (1.5,0.5) {$y$};
\draw (v2)--(y) (v3)--(y);

\node[vertex] (z) at (-1.5,0.5) {$x$};
\draw (v4)--(z) (v5)--(z);

\node [vertex] (t) at (0,1.5) {$t$};
\draw (t)--(v5)   (t)--(v1)  (t)--(v2)  (t)--(y)  (t)--(z);

\node at (0,-2) {$F_2$};
\end{tikzpicture}
\end{subfigure}
\caption{Two special graphs $F_1$ and $F_2$.}
\label{fig:F1F2}
\end{figure}
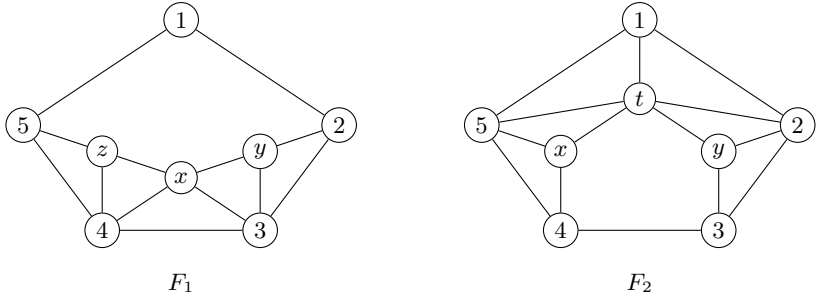

\begin{lemma}\label{lem:F1}
If a $(P_6,C_4)$-free atom $G$ contains $F_1$ (see \autoref{fig:F1F2}), then $G$ contains a small vertex or a universal vertex.
\end{lemma}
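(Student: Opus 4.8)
The plan is to work with the $C_5$-partition machinery from Section~\ref{sec:C5}. Since $G$ contains $F_1$, it contains an induced $C_5$, say $C = 1,2,3,4,5$, together with vertices $x,y,z$ where $y\in S(2,3)$, $z\in S(4,5)$, and $x$ is adjacent to $3,4,y,z$ but not to $1,2,5$ (so $x\in S(3,4)$). I would first fix this configuration and partition $V(G)$ with respect to $C$ using \ref{item:partition}, then systematically exploit the adjacency rules \ref{item:s5}--\ref{item:s1s2} to pin down where the remaining vertices can live. The goal is to identify a single vertex—most likely $x$ itself, or one of its relatives—whose neighborhood is forced either to be small (degree at most $\frac{3}{2}\omega(G)-1$) or to be everything (universal).

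\textbf{Controlling the neighborhood of the apex.} The heart of the argument should be to analyze $N(x)$ carefully. By \ref{item:s1s2}, $S(3,4)$ is anti-complete to every $S(j)$ with $j\neq 1$, and by \ref{item:s3antics2} it is anti-complete to the relevant $3$-vertex sets; these facts sharply restrict which vertices outside $C$ can neighbor $x$. I expect that the vertices adjacent to $x$ are confined to a few blocks (roughly $S(3,4)$ itself, parts of $S_5$, the two relevant triples, and $y,z$). The key quantitative step will be to compare $|N(x)|$ against $\omega(G)$: on one hand $x$ together with $3,4,y,z$ already forces a sizeable clique structure; on the other hand, if $x$'s neighborhood is genuinely large, I want to extract a clique nearly as large as $N(x)$, forcing $\omega(G)$ up and hence making $x$ (or another vertex) small by comparison. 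Concretely, I would try to show that either the neighborhood of $x$ induces a graph with bounded independence number (so that a large clique lives inside it), or $x$ has few neighbors outright.

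\textbf{The dichotomy and the main obstacle.} The cleanest outcome I am aiming for is a dichotomy: either some vertex has degree at most $\frac{3}{2}\omega(G)-1$, or the structure is so rigid that a universal vertex appears. The natural candidate for a universal vertex is a vertex of $S_5$ (complete to all of $C$) whose behaviour toward the other blocks is forced to be complete by the $C_4$-freeness rules \ref{item:s5} and the domination property in \autoref{lem:P6C4atom}(i). I would use the fact that $G$ is an atom (no clique cutset) to rule out degenerate separations, and the domination of every induced $C_5$ (\autoref{lem:P6C4atom}(i)) to guarantee that $V(G)$ is covered by $V(C)\cup S_1\cup S_2\cup S_3\cup S_5$ with no ``far'' vertices escaping.

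\textbf{Where the difficulty lies.} I expect the main obstacle to be the bookkeeping that relates $|N(v)|$ to $\omega(G)$ for the candidate small vertex: the degree bound is $\frac{3}{2}\omega(G)-1$, so a factor of $\frac{3}{2}$ must emerge naturally, which suggests that $N(v)$ should decompose into at most two cliques (each of size at most $\omega(G)$, but overlapping or jointly constrained to total at most $\frac{3}{2}\omega(G)-1$). Establishing that every vertex adjacent to the apex falls into one of two clique-like parts—and that these parts cannot both be full size simultaneously without creating a $P_6$, a $C_4$, or a clique cutset—will be the delicate part. I would handle this by a careful case split on whether the blocks $S(i,i+1)$ adjacent to $x$ are empty, invoking \ref{item:S1S2empty} and \ref{item:s2} to collapse cases, and then arguing that a non-small, non-universal configuration forces $G$ to be a blow-up of the Petersen graph or of $F$ (deferred to the later lemmas), so that within the scope of this lemma the only surviving outcomes are a small or a universal vertex.
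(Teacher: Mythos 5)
Your proposal is a plan rather than a proof, and two of its load-bearing ideas do not survive contact with the actual structure. First, the decisive device that makes the hard case tractable is missing entirely: one must choose the copy of $F_1$ (equivalently, its underlying $C_5$) so that $|S_2|$ is \emph{maximized}. When $S_5=\emptyset$, the completeness relations among the $3$-vertex sets (that $S(1,2,3)$ is complete to $S(2,3,4)$, and $S(2,3,4)$ to $S(3,4,5)$) are proved precisely by swapping a cycle vertex for a putative non-neighbor and contradicting this maximality; nothing in your sketch substitutes for this extremal argument, and without it the structure does not collapse to the configuration of \autoref{fig:noS5} on which the counting rests. Second, your hope that a vertex $u\in S_5$ is ``forced to be complete to the other blocks by $C_4$-freeness'' is false. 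What $C_4$-freeness gives is only a dichotomy (\ref{item:s5antis2}): each $u\in S_5$ is complete \emph{or anti-complete} to $S_2$, and the anti-complete case genuinely occurs (the graph $F_3$ of \autoref{fig:F3} realizes it). Handling that case is most of the work: one shows $G$ is then a blow-up of $F_3$ and runs a degree count. Note also that your candidate small vertex $x\in S(3,4)$ is the wrong one: in the $F_3$ structure, $N(x)$ decomposes into two cliques ($Q_x\cup Q_y\cup Q_3$ and $Q_z\cup Q_4$, in the paper's notation), which only yields $d(x)\le 2\omega(G)-1$. The paper instead plays $y$ against the cycle vertex $5$: their ``excess'' neighborhoods ($Q_2$ versus $Q_1\cup Q_u$) lie in a common clique, so they cannot both exceed $\omega(G)/2$, and that is where the factor $\frac{3}{2}$ comes from.

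There is also a structural incoherence in your final step. You propose that a non-small, non-universal configuration ``forces $G$ to be a blow-up of the Petersen graph or of $F$ (deferred to the later lemmas).'' But the lemma you are proving admits only two outcomes---small vertex or universal vertex---and the later lemmas (\autoref{lem:C6}, \autoref{lem:F2}, \autoref{lem:C5}) all \emph{assume} $F_1$-freeness; that is exactly why this lemma is proved first with the stronger conclusion. Deferring the blow-up outcomes to lemmas that presuppose the absence of $F_1$ is circular. What is really true (and implicit in the paper's result) is that blow-ups of the Petersen graph and of $F$ contain no induced $F_1$, so these outcomes simply cannot arise here; if your argument produces them, you would have to prove this $F_1$-freeness separately, which you do not do.
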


\begin{proof}
Let $G$ be a $(P_6,C_4)$-free atom that contains an induced subgraph $H$ that is isomorphic to $F_1$
with $V(H)=\{1,2,3,4,5,x,y,z\}$ where $1,2,3,4,5,1$ induces the \emph{underlying} five-cycle $C$
of $F_1$ and $x$ is adjacent to $3$ and $4$, $y$ is adjacent to $2$ and $3$, 
$z$ is adjacent to $4$ and $5$, and $x$ is adjacent to $y$ and $z$, see \autoref{fig:F1F2}. 
We partition $V(G)$ with respect to $C$.
We choose the copy of $H$ in $G$ such that $|S_2|$ maximized.
Note that $x\in S(3,4)$, $y\in S(2,3)$ and $z\in S(4,5)$.
All indices below are modulo $5$. Since $G$ is an atom, it follows from
\autoref{lem:P6C4atom} that $S_0=\emptyset$. 

% properties among S(X): first $S_j$, then between $S_i$ and $S_j$, and then between subsets in $S_j$ and subsets in $S_i$
\begin{enumerate}[label=\bfseries (\arabic*)]

\emitem {$S_2=S(2,3)\cup S(3,4)\cup S(4,5)$}\label{item:s2empty}

Recall that $x\in S(3,4)$, $y\in S(2,3)$ and $z\in S(4,5)$. By symmetry, it suffices to
show that $S(1,2)=\emptyset$. Suppose that $S(1,2)$ contains one vertex, say $s$.
Then $s$ is not adjacent to $x$ and $z$, and $x$ and $z$ are adjacent by \ref{item:s2}.
This implies that $5,z,x,3,2,s$ induces a $P_6$, a contradiction. \e

\emitem {$S_1=\emptyset$.}\label{item:s1empty}

Since $S(2,3)$, $S(3,4)$ and $S(4,5)$ are not empty, the statement follows directly from \ref{item:S1S2empty}.
\e

\emitem {$S_5$ is either complete or anti-complete to $S_2$.} \label{item:s5antis2}

Let $u\in S_5$ be an arbitrary vertex.
Note that any $x'\in S(3,4)$ and any $z'\in S(4,5)$ are adjacent by \ref{item:s2}.
Consider the induced six-cycle $C'=z',5,1,2,3,x',z'$. Since $u$
is adjacent to $5,1,2,3$, it follows from the $C_4$-freeness of $G$ 
that $u$ is either complete or anti-complete to $\{x',z'\}$. 
Similarly,  $u$ is either complete or anti-complete to $\{x',y'\}$
for any $x'\in S(3,4)$ and $y'\in S(2,3)$.
This implies that $u$ is either complete or anti-complete to $S_2$.
\e

\emitem {$S(5,1,2)$ is anti-complete $S_2$.} \label{item:s3s21}

Let $t\in S(5,1,2)$ be an arbitrary vertex. By \ref{item:s3antics2}, $S(5,1,2)$ is anti-complete to $S(3,4)$.  
Suppose that $t$ has a neighbor in $S(2,3)\cup S(4,5)$, say  $y'\in S(2,3)$.
Note that $y'$ is adjacent to $x$ and non-adjacent to $z$ by \ref{item:s2}, and that $t$ is not adjacent
to $x$ by \ref{item:s3antics2}.
Then either $1,t,y',3,4,z$ induces a $P_6$ or $\{t,z,x,y'\}$ induces a $C_4$, depending on
whether $t$ and $z$ are adjacent.  \e

\emitem {$S(4,5,1)$ is complete to $S(4,5)$. By symmetry, $S(1,2,3)$ is complete to $S(2,3)$.} \label{item:s3s22}

Let $t\in S(4,5,1)$ and $z'\in S(4,5)$ be two arbitrary vertices. 
Suppose that $t$ and $z'$ are not adjacent. 
Note that $t$ is not adjacent to $y$ by \ref{item:s3antics2}.
Then
either $t,5,z',x,y,2$ induces a $P_6$ or $\{t,5,z',x\}$ induces a $C_4$,
depending on whether $t$ and $x$ are adjacent.
\e

\emitem {$S(2,3,4)$ is complete to $S(3,4)$. By symmetry, $S(3,4,5)$ is complete to $S(3,4)$.} \label{item:s3s23}

Let $t\in S(2,3,4)$ and $x'\in S(3,4)$ be two arbitrary vertices. 
Suppose that $t$ and $x'$ are not adjacent. 
Note that $x'$ is adjacent to $z$ by \ref{item:s2}.
Then either $t,3,x',z,5,1$ induces 
a $P_6$ or $\{t,3,x',z\}$ induces a $C_4$, depending on whether $t$ and $z$ are adjacent.
\e

\emitem {$S(2,3,4)$ is complete to $S(2,3)$. By symmetry, $S(3,4,5)$ is complete to $S(4,5)$.} \label{item:s3s24}

Let $t\in S(2,3,4)$ and $y'\in S(2,3)$ be two arbitrary vertices. 
By \ref{item:s2} and \ref{item:s3s23}, $x$ is adjacent to both $t$ and $y'$.
So, $t$ and $y'$ are adjacent, for otherwise $\{t,x,y',2\}$ induces a $C_4$.
\e

\emitem {$S(5,1,2)$ is complete to $S(1,2,3)$ and $S(4,5,1)$.} \label{item:s3s31}

By symmetry, it suffices to show that $S(5,1,2)$ is complete to $S(4,5,1)$.
Suppose that $s\in S(5,1,2)$ is not adjacent to $t\in S(4,5,1)$. 
Note that $s$ is not adjacent to $y$ by \ref{item:s3s21}, and that $t$ is not adjacent to $y$ by \ref{item:s3antics2}. Then.
Then $s,1,t,4,3,y$ induces a $P_6$, and this is a contradiction. 
\e

\emitem {Let $s\in S(3,4,5)$ and $t\in S(4,5,1)$ such that $s$ and $t$ are not adjacent. Then $t$
is anti-complete to $S(3,4)$ and $s$ is complete to $S(2,3)$.} \label{item:s3s32}

Let $x'\in S(3,4)$ be an arbitrary vertex.
First,  $x'$ and $s$ are adjacent by \ref{item:s3s23}. 
Moreover, $x'$ and $t$ are not adjacent, for otherwise $\{5,t,x',s\}$ induces a $C_4$.
This proves the first part of  \ref{item:s3s32}. Now let $y'\in S(2,3)$ be an arbitrary
vertex. By \ref{item:s2}, $y'$ is adjacent to $x$. 
Moreover, $y'$ is not adjacent to $t$ by \ref{item:s3antics2}, and $s$ is adjacent to $x$ by \ref{item:s3s23}.
If $s$ and $y'$ are not adjacent, then $t,5,s,x,y',2$
induces a $P_6$, a contradiction. This shows that $s$ is complete to $S(2,3)$.
\e

\emitem {Let $s\in S(2,3,4)$ and $t\in S(3,4,5)$ such that $s$ and $t$ are not adjacent. 
Then $s$ is anti-complete to $S(4,5)$ and $t$ is anti-complete to  $S(2,3)$.} \label{item:s3s33}

Let $z'\in S(4,5)$ be an arbitrary vertex. By  \ref{item:s3s24}, $t$ is adjacent to $z'$.
If $s$ and $z'$ are adjacent, then $\{s,z',t,3\}$ induces a $C_4$, a contradiction.
This proves that $s$ is anti-complete to $S(4,5)$. By symmetry, $t$ is anti-complete
to $S(2,3)$.
\e
%\end{enumerate}
\end{enumerate}

\noindent
We  distinguish two cases depending on whether $S_5$ is empty.

\noindent {\bf Case 1.} $S_5$ contains a vertex $u$. 
By \ref{item:s5antis2}, $u$ is either complete or anti-complete to $S_2$.
If $u$ is complete to $S_2$, then $u$ is a universal in $G$ by \ref{item:partition}, \ref{item:s5} and \ref{item:s1empty},
and we are done.  So, we assume that $u$ is anti-complete to $S_2$.
We prove some additional 
properties of the graph with the existence of $u$.

\begin{enumerate}[label=\bfseries (\alph*)]
\emitem {$S(3,4,5)$ is anti-complete to $S(2,3)$. By symmetry, $S(2,3,4)$ is anti-complete to $S(4,5)$.} \label{item:s3s25}

Let $t\in S(3,4,5)$ and $y'\in S(2,3)$ be two arbitrary vertices.
Suppose that $t$ and $y'$ are adjacent. By \ref{item:s5} and \ref{item:s5antis2}, $u$ is adjacent to $t$
but not adjacent to $y'$. Then $\{t,u,2,y'\}$ induces a $C_4$, a contradiction. This proves the claim. 
\e

\emitem {$S(4,5,1)$ is anti-complete to $S(3,4)$. By symmetry, $S(1,2,3)$ is anti-complete to $S(3,4)$.} \label{item:s3s26}

Let $t\in S(4,5,1)$ and $x'\in S(3,4)$ be two arbitrary vertices. 
 By \ref{item:s5} and \ref{item:s5antis2}, $u$ is adjacent to $t$
but not adjacent to $x'$.  If $t$ and $x'$ are adjacent, 
then $\{t,u,3,x'\}$ induces a $C_4$, a contradiction.
\e

\emitem {$S(2,3,4)$ is complete to $S(3,4,5)$.} \label{item:s3s34}

Let $s\in S(2,3,4)$ and $t\in S(3,4,5)$ be two arbitrary vertices.
Then $x$ is adjacent to both $s$ and $t$ by \ref{item:s3s23}.
By \ref{item:s5} and \ref{item:s5antis2}, $u$ is adjacent to $s$ and $t$
but not adjacent to $x$. If $s$ and $t$ are not adjacent, then $\{x,s,u,t\}$
induces a $C_4$. 
\e

\emitem {$S(4,5,1)$ is complete to $S(3,4,5)$. By symmetry, $S(1,2,3)$ is complete to $S(2, 3,4)$. } \label{item:s3s35}

This follows directly from \ref{item:s3s25} and \ref{item:s3s32}.
\e
\end{enumerate}

%%%%%%%%%%%%%%%%%%%%%%
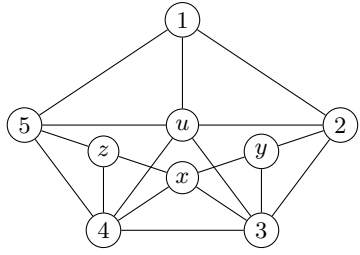
\begin{figure}[tb]
\centering
\begin{tikzpicture}[scale=0.7]
\tikzstyle{vertex}=[draw, circle, fill=white!100, minimum width=4pt,inner sep=2pt]
%\draw[step=1cm,color=gray] (0,0) grid (10,10);

% F1
\node [vertex] (v1) at (0,3) {$1$};
\node [vertex] (v2) at (3,1) {$2$};
\node [vertex] (v3) at (1.5,-1) {$3$};
\node [vertex] (v4) at (-1.5,-1) {$4$};
\node [vertex] (v5) at (-3,1) {$5$};
\node[vertex] (u) at (0,1) {$u$};
\draw 
(v1)--(v2)--(v3)--(v4)--(v5)--(v1);

\node[vertex] (y) at (1.5,0.5) {$y$};
\draw (v2)--(y) (v3)--(y);

\node[vertex] (z) at (-1.5,0.5) {$z$};
\draw (v4)--(z) (v5)--(z);

\node [vertex] (x) at (0,0) {$x$};
\draw (x)--(v3)   (x)--(v4)  (x)--(y)  (x)--(z);

\draw
(u)--(v1) (u)--(v2) (u)--(v3) (u)--(v4) (u)--(v5);
%\node at (0,-2) {$F_1$};
\end{tikzpicture}
\caption{The graph $F_3$.}
\label{fig:F3}
\end{figure}

\noindent
Note that $S(4,5,1)$ is anti-complete to $S(2,3)$ and that $S(1,2,3)$ is anti-complete
to $S(4,5)$ by \ref{item:s3antics2}.
It  follows from \ref{item:s5}-\ref{item:s1}, \ref{item:s2empty}-\ref{item:s3s33}, and \ref{item:s3s25}-\ref{item:s3s35} that
$G$ is a blow-up of a special graph $F_3$ (see \autoref{fig:F3}). We denote by $Q_v$ the clique that $v\in V(F_3)$ is blown into.
Suppose first that $|Q_2|\le \omega(G)/2$.
Note that $N_G(y)=(Q_y\setminus \{y\})\cup Q_x\cup Q_2\cup Q_3$.
Since $(N_G(y)\setminus Q_2)\cup \{y\}$ is a clique, it follows that $|N_G(y)\setminus Q_2|\le \omega(G)-1$.
Therefore, $d_G(y)\le \omega(G)-1+|Q_2|\le \omega(G)-1+\omega(G)/2=\frac{3}{2}\omega(G)-1$. 
Now suppose that $|Q_2|> \omega(G)/2$. This implies that $|Q_1\cup Q_u|<\omega(G)/2$.
Note that $(N_G(5)\cup \{5\})\setminus (Q_1\cup Q_u)$ is a clique. 
Therefore, $d_G(5)\le \omega(G)-1+|Q_1\cup Q_u|\le \frac{3}{2}\omega(G)-1$.

\noindent {\bf Case 2.} $S_5$ is empty. 

\begin{enumerate}[label=\bfseries (\alph*)]
\emitem {$S(1,2,3)$ is complete to $S(2,3,4)$. By symmetry, $S(4,5,1)$ is complete to  $S(3,4,5)$.} \label{item:s3s36}

Suppose that $s\in S(2,3,4)$ and $r\in S(1,2,3)$ are not adjacent. By  \ref{item:s3s32} and \ref{item:s3s22},
$r$ is complete to $S(2,3)$ and anti-complete to $S(3,4)$. 
Moreover, $r$ is anti-complete to $S(4, 5)$ by \ref{item:s3antics2}.
Note that $V(H)\setminus \{2\}\cup \{r\}$
also induces a subgraph $H'$ that is isomorphic to $F_1$ whose underlying five-cycle is $C'=C\setminus \{2\}\cup \{r\}$.
Clearly, $s$ is adjacent to exactly two vertices on $C'$. Therefore, 
the number of $2$-vertices with respect to $C'$ is more than that with respect to $C$, and this contradicts the
choice of $H$.
\e

\emitem {$S(2,3,4)$ is complete to $S(3,4,5)$.} \label{item:s3s37}

Suppose that $s\in S(2,3,4)$ and $r\in S(3,4,5)$ are not adjacent. 
By \ref{item:s3s23} and \ref{item:s3s24}, $s$ is complete to $S(2,3)\cup S(3,4)$.  
By  \ref{item:s3s33}, $s$ is anti-complete to $S(4,5)$.
Note that $V(H)\setminus \{3\}\cup \{s\}$
also induces a subgraph $H'$ that is isomorphic to $F_1$ whose underlying five-cycle is $C'=C\setminus \{3\}\cup \{s\}$.
Clearly, $r$ is adjacent to exactly two vertices in $C'$. Therefore, 
the number of $2$-vertices with respect to $C'$ is more than that with respect to $C$, and this contradicts the
choice of $H$.
\e
\end{enumerate}

\noindent
By \ref{item:s3s36}, \ref{item:s3s37}, \ref{item:s3s21}-\ref{item:s3s33},
$S(i-1,i,i+1)$ is complete to $S(i,i+1,i+2)$,  $S(2,3)$ is complete to $S(1,2,3)\cup S(2,3,4)$
and anti-complete to $S(4,5,1)\cup S(5,1,2)$, $S(4,5)$ is complete to $S(3,4,5)\cup S(4,5,1)$
and anti-complete to $S(5,1,2)\cup S(1,2,3)$, and $S(3,4)$ is complete to $S(2,3,4)\cup S(3,4,5)$
and anti-complete to $S(5,1,2)$, see \autoref{fig:noS5}.

% diagram 
\begin{figure}[tb]
\center
\begin{tikzpicture}[scale=0.5]
\tikzstyle{vertex}=[draw, circle, fill=white!100, minimum width=4pt,inner sep=2pt]
\tikzstyle{set}=[draw, circle, minimum width=2pt, inner sep=2pt]
%\draw[step=1cm,color=gray] (-5,-5) grid (5,5);

% F1
\node [set, blue] (v1) at (0,3) {$1$};
\node [set, blue] (v2) at (6,0.5) {$2$};
\node [set, blue] (v3) at (3,-3) {$3$};
\node [set,blue] (v4) at (-3,-3) {$4$};
\node [set,blue] (v5) at (-6,0.5) {$5$};

\node [set] (x) at (0,-1.5) {$S(3,4)$};
\node[set] (y) at (2.7,0) {$S(2,3)$};
\node[set] (z) at (-2.7,0) {$S(4,5)$};

\node[set] (t1) at (0,6) {$S(5,1,2)$};
\node[set] (t2) at (9, 0) {$S(1,2,3)$};
\node[set] (t5) at (-9,0) {$S(4,5,1)$};
\node[set] (t3) at (5,-5) {$S(2,3,4)$};
\node[set] (t4) at (-5,-5) {$S(3,4,5)$};

\draw [blue]
(v1)--(v2)--(v3)--(v4)--(v5)--(v1);
\draw[ultra thick]
(x)--(v3)
(x)--(v4)
(x)--(y)
(v2)--(y)
(v3)--(y)
(x)--(z)
(v4)--(z)
(v5)--(z)

(t1)--(v5) (t1)--(v1) (t1)--(v2)
(t2)--(v1) (t2)--(v2) (t2)--(v3)  (t2)--(y) (t2)--(t1)
(t5)--(v4) (t5)--(v5) (t5)--(v1) (t5)--(z) (t5)--(t1)
(t3)--(v2) (t3)--(v3) (t3)--(v4) (t3)--(y) (t3)--(x) (t3)--(t2)
(t4)--(v3) (t4)--(v4) (t4)--(v5) (t4)--(z) (t4)--(x) (t4)--(t5) (t4)--(t3);       

\draw%[dashed]
(t2)--(x)
(t5)--(x);
\draw[bend left]
(t3)--(z)
(t4)--(y);
\end{tikzpicture}
\caption{The structure of $G$. A thick line between two sets represents that the two sets are complete to each other, and
a thin line represents that the edges between the two sets can be arbitrary.  Two sets are anti-complete
if there is no line between them.}\label{fig:noS5}
\end{figure}
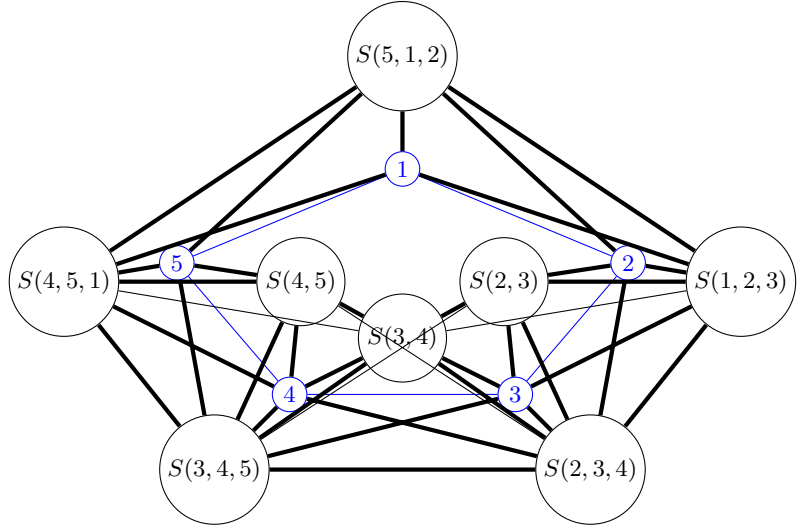 

Let $Q_i=S(i-1,i,i+1)\cup \{i\}$ for $i\in \{1,2,3,4,5\}$.
Suppose first that $|Q_2|\le \omega(G)/2$. Then
$d_G(1)=|S(5,1,2)\cup Q_5|+|Q_2|\le \omega(G)-1+\omega(G)/2=\frac{3}{2}\omega(G)-1$.
Now suppose that $|Q_2|> \omega(G)/2$. This implies that $|Q_1|<\omega(G)/2$.
Therefore, $d_G(5)=|S(4,5,1)\cup Q_4\cup S(4,5)|+|Q_2|\le \omega(G)-1+\omega(G)/2=\frac{3}{2}\omega(G)-1$.
This completes the proof of Case 2.

\end{proof}

\begin{lemma}\label{lem:C6}
If a $(P_6,C_4,F_1)$-free atom $G$ contains a $C_6$, then one of the following is true:

$\bullet$ $G$ contains a small vertex,

$\bullet$ $G$ contains a universal vertex,

$\bullet$ $G$ is a blow-up of the Petersen graph (see \autoref{fig:counterexample}),

$\bullet$ $G$ is a blow-up of the graph $F$ (see \autoref{fig:counterexample}).
\end{lemma}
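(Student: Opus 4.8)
The plan is to split on whether the given induced $C_6$ is dominating, using \autoref{lem:P6C4atom} to dispose of the non-dominating case at once. Write $C=1,2,\ldots,6,1$ for the $C_6$. If $C$ is \emph{not} dominating, then \autoref{lem:P6C4atom}(ii) says that $G$ is the join of a blow-up of the Petersen graph and a (possibly empty) clique $K$. If $K=\emptyset$ then $G$ is itself a blow-up of the Petersen graph and the third outcome holds; if $K\neq\emptyset$ then every vertex of $K$ is adjacent to all other vertices, so $G$ has a universal vertex and the second outcome holds. Hence from now on I may assume that \emph{every} induced $C_6$ of $G$ is dominating; in particular $C$ is dominating, so in the partition of $V(G)\setminus V(C)$ by neighbourhoods on $C$ we have $S_0=\emptyset$.

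Next I would pin down which neighbourhood classes can be nonempty. Using $C_4$-freeness, no vertex can be adjacent to two $C$-vertices at cyclic distance two while missing the vertex between them, and none can be adjacent to exactly five of the six (the two neighbours of the missing vertex would induce a $C_4$); so $S_5=\emptyset$ and the only admissible classes are $S_6$, the consecutive triples $S(i{-}1,i,i{+}1)$, the consecutive quadruples $S(i,i{+}1,i{+}2,i{+}3)$, the edges $S(i,i{+}1)$, the opposite pairs $S(i,i{+}3)$, and the singletons $S(i)$. As in \ref{item:s5}, every class whose neighbourhood on $C$ contains two non-adjacent vertices is a clique (otherwise two non-adjacent class-vertices together with such a pair induce a $C_4$). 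The key device is that whenever a vertex lies in an opposite-pair or quadruple class it creates, with four consecutive vertices of $C$, an induced $C_5$; by \autoref{lem:P6C4atom}(i) that $C_5$ is dominating, and I can then import the whole toolbox \ref{item:partition}--\ref{item:s1s2} of \autoref{sec:C5} to constrain the neighbourhoods of the remaining vertices. Combined with $P_6$-freeness and $F_1$-freeness, I expect this to force the three quadruple classes $S(6,1,2,3)$, $S(2,3,4,5)$, $S(4,5,6,1)$ to be pairwise anti-complete (an edge between two of them would yield a $C_4$ through two antipodal $C$-vertices), while the ``off-pattern'' classes (opposite pairs, edges, singletons, the remaining quadruples and triples, and $S_6$) are each either empty or give rise to a universal vertex.

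The target configuration is exactly the graph $F$: its $C_6$ together with the three quadruple vertices $q_{6123}\in S(6,1,2,3)$, $q_{2345}\in S(2,3,4,5)$, $q_{4561}\in S(4,5,6,1)$, each blown up into a clique. So the bulk of the argument is a sequence of completeness/anti-completeness claims -- the $C_6$ analogues of \ref{item:s2}--\ref{item:s1s2} -- showing that, unless $G$ has a universal vertex, the only nonempty classes are these three quadruples and $V(C)$, with adjacencies dictated by $F$; grouping each $C$-vertex with its forced twins then exhibits $G$ as a blow-up of $F$. Whenever the reduction stalls on a slightly degenerate configuration (for instance a non-universal $S_6$ vertex, or a surviving triple class), I would close the case by the degree-counting idea used for \autoref{lem:F1}: set $Q_i=S(i{-}1,i,i{+}1)\cup\{i\}$ (with analogous cliques for the quadruples) and show that if the blow-up is unbalanced relative to $\omega(G)/2$, then some vertex has degree at most $\tfrac{3}{2}\omega(G)-1$, i.e.\ is small.

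The hard part will be the middle step: the partition around a $C_6$ has far more classes than the $C_5$ partition of \autoref{sec:C5}, and each of the off-pattern classes must be eliminated or shown to force a universal/small vertex. The delicate points are orchestrating the auxiliary $C_5$'s so that \autoref{lem:P6C4atom}(i) and the imported properties \ref{item:s2}--\ref{item:s1s2} genuinely apply, making essential use of $F_1$-freeness (the reason $F_1$ is eliminated \emph{before} $C_6$), and ensuring the surviving classes line up into precisely the $F$ pattern rather than some rotated or Petersen-like configuration -- the latter already excluded here because every $C_6$ is assumed dominating.
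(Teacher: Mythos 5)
Your first step (splitting on whether the $C_6$ dominates and invoking \autoref{lem:P6C4atom}(ii) in the non-dominating case) is exactly the paper's opening move and is fine. The gap is in everything after that. Your plan rests on the dichotomy that, besides $V(C)$ and the three alternating quadruple classes $S(6,1,2,3)$, $S(2,3,4,5)$, $S(4,5,6,1)$, every other class of the partition is ``either empty or gives rise to a universal vertex.'' That claim is false, and a proof organized around it would stall. First, the triple classes $S(i-1,i,i+1)$ survive in the target structure itself: in a blow-up of $F$, the vertices blown into cycle-vertex $i$ lie precisely in $S(i-1,i,i+1)$, so these classes can be nonempty with no universal vertex present and cannot be eliminated; the paper instead proves (using $F_1$-freeness in an essential way) that $M_i=S(i-1,i,i+1)\cup\{i\}$ is a clique module and treats $M_i$ as the blow-up of vertex $i$. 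Second, in the genuinely degenerate configurations the correct outcome is a \emph{small} vertex, not a universal one: if, say, $S(6,1,2,3)\neq\emptyset$ but $S(4,5,6,1)=\emptyset$, the paper shows that vertex $1$ or $4$ is small; and if all quadruple classes are empty but the opposite pairs $S(1,4),S(2,5),S(3,6)$ and the triples are not (the paper's Case~2), one again exhibits a small vertex. Your closing remark about degree counting gestures at the right tool, but it cannot be run as stated: the counts of the form $d_G(4)\le\frac{3}{2}\omega(G)-1$ depend on first having proved the module/clique structure (each $S(i,i+3)$, each quadruple class, and each $M_i$ is a module, together with the complete/anti-complete relations among them), and none of that is established in your sketch---it is exactly the ``bulk of the argument'' that you defer.

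A secondary issue: your list of admissible classes wrongly retains the singletons $S(i)$ and the edges $S(i,i+1)$; both are excluded outright by $P_6$-freeness (e.g.\ for $v\in S(1,2)$ the sequence $v,1,6,5,4,3$ is an induced $P_6$), which is why the paper's partition around a dominating $C_6$ lists only the opposite pairs, consecutive triples, consecutive quadruples, and $S(C)$. Also, your proposed device of passing to auxiliary $C_5$'s in order to import the toolbox of \autoref{sec:C5} is never actually used to derive a single adjacency claim, and it is unnecessary: the paper gets all such claims from two one-line $C_4$-based observations about $S(X)$ versus $S(Y)$ applied directly to the $C_6$ partition, supplemented by $P_6$- and $F_1$-freeness. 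So while your skeleton and target ($F$, the three alternating quadruples, the $Q_i$-style counting) match the paper, the middle of the proof---the case analysis on $S_4\neq\emptyset$ versus $S_4=\emptyset$, and the module structure that makes both the blow-up identification and the degree counts valid---is missing, and the organizing claim you propose to prove in its place is not true.
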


\begin{proof} Let $C=1,2,3,4,5,6,1$ be an induced six-cycle of $G$. We partition $V(G)$
with respect to $C$. If $C$ is not dominating, then it follows from \autoref{lem:P6C4atom} that $G$ is
the join of a blow-up of the Petersen graph and a (possibly empty) clique. This implies that
either $G$ contains a universal vertex or $G$ is a blow-up of the Petersen graph.
(Note that the Petersen graph does not contain any small vertex: every vertex has degree $3>\frac{3}{2}\times 2-1$).

In the following, we assume that $C$ is dominating, i.e., $S_0= \emptyset$. 
All indices below are modulo $6$.
It is straightforward to verify (by the fact that $G$ is $(C_4,P_6)$-free) that
\[V(G)=V(C)\cup \bigcup_{i}S(i,i+3)\cup \bigcup_{i}S(i-1,i,i+1)\cup
\bigcup_{i}S(i-1,i,i+1,i+2)\cup S(C).\]

\begin{enumerate}[label=\bfseries (\arabic*)]

\emitem {Let $X,Y\subseteq \{1,2,3,4,5,6\}$ such that $X\cap Y$ contains two non-adjacent vertices $i$ and $j$.
Then $S(X)$ is complete to $S(Y)$. In particular, if $X=Y$ then $S(X)$ is a clique.}\label{cla:complete}

Let $x\in S(X)$ and $y\in S(Y)$ be two arbitrary vertices.
If $x$ and $y$ are not adjacent, then $\{i,j,x,y\}$  induces a $C_4$, a contradiction.
\e

\emitem {Let $X,Y\subseteq \{1,2,3,4,5,6\}$ such that there exists an index $i\in \{1,2,3,4,5,6\}$ 
with $i\in X\setminus Y$ and $i+1\in Y\setminus X$.
Then $S(X)$ is anti-complete $S(Y)$.}\label{cla:anti-complete}

Let $x\in S(X)$ and $y\in S(Y)$ be two arbitrary vertices.
If $x$ and $y$ are adjacent, then $\{i,i+1,x,y\}$  induces a $C_4$, a contradiction. \e

\end{enumerate}

By \ref{cla:complete}, $S(i,i+3)$, $S(i-1,i,i+1)$ and $S(i-1,i,i+1,i+2)$ is a clique for each $i$, and every vertex
in $S(C)$ is a universal vertex in $G$. 
So, we may assume that $S(C)=\emptyset$ otherwise we are done.
We now explore the adjacency between different $S(X)$.
First of all, \ref{cla:anti-complete} implies that $S(1,4)$, $S(2,5)$ and $S(3,6)$ are pairwise
anti-complete to each other. Secondly, $S(1,4)$ is anti-complete to $S(i-1,i,i+1)$ for $i=2,3,5,6$ by \ref{cla:anti-complete}.
Suppose that $x\in S(1,4)$ is not adjacent to $y\in S(6,1,2)$. Then $2,y,6,5,4,x$ induces a $P_6$, a contradiction.
This shows that $S(1,4)$ is complete to $S(6,1,2)$ and hence to $S(3,4,5)$ by symmetry.
Similarly,  $S(1,4)$ is anti-complete to $S(i-1,i,i+1,i+2)$ for $i=1,3,4,6$ by \ref{cla:anti-complete}
and complete to  $S(i-1,i,i+1,i+2)$ for $i=2,5$ by \ref{cla:complete}.
So, $S(1,4)$ is a module of $G$. By symmetry, $S(2,5)$ and $S(3,6)$ are also modules.

We now show that each $S(i-1,i,i+1,i+2)$ is a module.  By \ref{cla:complete} and  \ref{cla:anti-complete},
$S(i-1,i,i+1,i+2)$ is complete to $S(j-1,j,j+1,j+2)$ if $|i-j|=1$ or  $|i-j|=3$, and anti-complete to $S(j-1,j,j+1,j+2)$ if $|i-j|=2$.
Since $S(i,i+3)$ is a module for each $i$, it remains to show that $S(i-1,i,i+1,i+2)$ is either complete or anti-complete
to $S(X)$ with $|X|=3$. By symmetry, it suffices to consider $S(1,2,3,4)$.
First, note that $S(1,2,3,4)$ is complete to $S(1,2,3)\cup S(2,3,4)$ by \ref{cla:complete},
and anti-complete to $S(4,5,6)\cup S(5,6,1)$ by \ref{cla:anti-complete}.
Suppose now that some vertex $x\in S(1,2,3,4)$ is not adjacent to some vertex $y\in S(6,1,2)$.
Then $C\cup \{x,y\}$ induces a subgraph isomorphic to $F_1$, contradicting 
our assumption that $G$ is $F_1$-free.
 Therefore, $ S(1,2,3,4)$ is complete to $S(6,1,2)$ and hence to $S(3,4,5)$ by symmetry.
Thus, $S(1,2,3,4)$ is indeed a module of $G$.

We show next that each $S(i-1,i,i+1)$ is a module. It remains to show that  $S(i-1,i,i+1)$ is either complete
or anti-complete to  $S(j-1,j,j+1)$ for $j\neq i$. It suffices to consider $S(1,2,3)$. 
If some vertex $x\in S(1,2,3)$ is not adjacent to some vertex $y\in S(2,3,4)$, then $x,1,6,5,4,y$ induces 
a $P_6$, a contradiction. So, $S(1,2,3)$ is complete to $S(2,3,4)$ and hence to $S(6,1,2)$ by symmetry.
Moreover, $S(1,2,3)$ is anti-complete to $S(4,5,6)$ by  \ref{cla:anti-complete}.
Suppose now that $x\in S(1,2,3)$ is adjacent to $y\in S(3,4,5)$. Then $C\cup \{x,y\}$ induces a subgraph
isomorphic to $F_1$,contradicting 
our assumption that $G$ is $F_1$-free.
Therefore, $S(1,2,3)$ is anti-complete to $S(3,4,5)$ and to $S(5,6,1)$. 
This shows that $S(1,2,3)$ is indeed a module of $G$.

It follows from the adjacency between $S(X)$ and $S(Y)$ for any $X,Y\subseteq \{1,2,3,4,5,6\}$ that
$M_i=S(i-1,i,i+1)\cup \{i\}$ is a module in $G$. 
Now we show that either $G$ contains a small vertex or $G$ is a blow-up of $F$ (see \autoref{fig:counterexample}).

\noindent {\bf Case 1.} $S_4\neq \emptyset$. By symmetry, we assume that $S(6,1,2,3)$
contains a vertex $x$.

\begin{enumerate}[label=\bfseries (\alph*)]

 \emitem {$S(1,4)=S(2,5)=\emptyset$.} \label{item:14and25} 
 
 By symmetry, it suffices to prove for $S(1,4)$. Suppose that $S(1,4)$ contains a vertex $y$.
 Then either $\{x,3,4,y\}$ induces a $C_4$ or $2,x,6,5,4,y$ induces a $P_6$, depending
 on whether $x$ and $y$ are adjacent. 
 \e
 
\emitem {One of $S(1,2,3,4)$ and $S(4,5,6,1)$ is empty. By symmetry, one of $S(2,3,4,5)$ and $S(5,6,1,2)$ is empty. } \label{item:14}

Suppose that neither $S(1,2,3,4)$ nor $S(4,5,6,1)$ is empty, say $y\in S(1,2,3,4)$ and $z\in S(4,5,6,1)$.
Then $y$ is adjacent to both $x$ and $z$, and $x$ and $z$ are not adjacent. Now $\{6,x,y,z\}$ induces a $C_4$,
a contradiction.
\e

\emitem {If $S(3,4,5,6)\neq \emptyset$, then $S_4=S(6,1,2,3)\cup S(3,4,5,6)$.}\label{item:36}

Let $z\in S(3,4,5,6)$. By symmetry, it suffices to show that $S(1,2,3,4)=\emptyset$. Suppose that
$y\in S(1,2,3,4)$. Then $x$ is adjacent to both $y$ and $z$, and $y$ and $z$ are not adjacent. Now $\{x,y,4,z\}$ induces a $C_4$. \e

\end{enumerate}

We now show that if $S(4,5,6,1)=\emptyset$, then either $1$ or $4$ is small.
Recall that $M_i=S(i-1,i,i+1)\cup \{i\}$ for $1\le i\le 6$ is a module of $G$,
and that $S(C)=\emptyset$.
By our assumption that $S(4,5,6,1)=\emptyset$ and \ref{item:14and25}, 
$N_{G}(4)=S(1,2,3,4)\cup S(2,3,4,5)\cup S(3,4,5,6)\cup M_3\cup S(3,4,5)\cup M_5$.
By the fact that $M_i$ is a module of $G$ and \ref{item:36},
$N_G(4)\setminus M_5$ is a clique. If $|M_5|\le \omega(G)/2$, then
$d_G(4)\le \frac{3}{2}\omega(G)-1$. So, we may assume that $|M_5|>\omega(G)/2$.
We then consider the vertex $1$. Note that 
$N_G(1)=Q_1\cup Q_2$, where $Q_1=S(6,1,2,3)\cup S(1,2,3,4)\cup M_2\cup S(6,1,2)$
and $Q_2=S(5,6,1,2)\cup M_6$ are cliques.
Moreover, $Q_2\cup M_5$ is a clique, and so $|Q_2|<\omega(G)/2$.
This implies that $d_G(1)\le \frac{3}{2}\omega(G)-1$. 
Similarly, if $S(2,3,4,5)=\emptyset$, then either $2$ or $5$ is small.

Now let $y\in S(2,3,4,5)$ and $z\in S(4,5,6,1)$. 
By \ref{item:14} and \ref{item:36}, $S(1,2,3,4)=S(3,4,5,6)=S(5,6,1,2)=\emptyset$.
If $S(3,6)$ contains a vertex $w$, then $w,3,2,1,z,5$ induces a $P_6$.
This means that $S(3,6)=\emptyset$. Recall that $S(1,4)=S(2,5)=\emptyset$ by \ref{item:14and25}.
Therefore, $G$ is a blow-up of $F$.
(Note that $F$ does not contain any small vertex: every vertex has degree $4>\frac{3}{2}\times 3-1$). This completes the proof of Case 1.

\noindent {\bf Case 2.} $S_4=\emptyset$.
Suppose first that $S(3,6)=\emptyset$. 
Since $M_1\cup M_2$ is a clique, either $M_1$ or $M_2$ has size
at most $\omega(G)/2$. If $|M_1|\le \omega(G)/2$, 
then $d_G(6)=|(M_6\setminus \{6\})\cup M_5|+|M_1|\le \omega(G)-1+\omega(G)/2=\frac{3}{2}\omega(G)-1$.
Similarly, if $|M_2|\le \omega(G)/2$, then $d_G(3)\le \frac{3}{2}\omega(G)-1$.
This  proves that either $3$ or $6$ is small.
So we assume that $S(3,6)$ contains a vertex $z$. By symmetry, 
we can assume that $S(1,4)$ contains a vertex $x$ and $S(2,5)$
contains a vertex $y$. 
Note that $N_G(x)=M_1\cup M_4$. If $|M_1|\le \omega(G)/2$,
then $d_G(x)\le |M_1|+|M_4|\le \omega(G)-1+\omega(G)/2=\frac{3}{2}\omega(G)-1$.
So, $|M_1|>\omega(G)/2$. This implies that $|M_2|\le \omega(G)/2$ since $M_1\cup M_2$ is a clique.
Then $d_G(y)=|M_2|+|M_5|\le \frac{3}{2}\omega(G)-1$.
 This completes the proof of Case 2.

This completes the proof of the lemma.
\end{proof}

\begin{lemma}\label{lem:F2}
If a $(P_6,C_4,C_6)$-free atom $G$ contains an $F_2$ (see \autoref{fig:F1F2}), then $G$ contains a small vertex.
\end{lemma}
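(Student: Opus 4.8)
The plan is to mimic the structure of the proof of \autoref{lem:C6}: assume $G$ contains an induced copy $H$ of $F_2$, fix the underlying five-cycle $C = 1,2,3,4,5,1$ together with the extra vertices $x \in S(4,5)$, $y \in S(2,3)$ and $t \in S(5,1,2)$, and partition $V(G)$ with respect to $C$ using the machinery \ref{item:partition}--\ref{item:s1s2} of \autoref{sec:C5}. Since $G$ is now additionally $F_1$-free and $C_6$-free, I expect these two extra hypotheses to do most of the heavy lifting: the $F_1$-freeness should let me rule out many adjacencies (as it already did in \autoref{lem:C6}, where each $C \cup \{x,y\}$ inducing an $F_1$ gave a contradiction), and the $C_6$-freeness should forbid several of the configurations that were previously allowed. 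As in \autoref{lem:F1}, I would choose the copy of $H$ so that $|S_2|$ (or perhaps $|S_2| + |S_5|$) is maximized, to get an extremality argument for the leftover cases.

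First I would establish the global emptiness facts. Because $t \in S(5,1,2)$ is nonempty, various $S(X)$ must vanish; in particular I expect to show that several of the $3$-vertex sets and $2$-vertex sets are empty, exploiting that any $P_6$ through $C$ or any induced $C_6$ gives a contradiction. The presence of the $3$-vertex $t$ together with the $2$-vertices $x,y$ should, via \ref{item:s3antics2}, \ref{item:s3s21}-type arguments and the $C_6$-free hypothesis, collapse the partition down to a small number of surviving classes. The goal of this bookkeeping phase is the same as in the previous two lemmas: to show that $G$ is a blow-up of some fixed small skeleton graph, with each surviving $S(X)$ being a clique (by \ref{item:s5}, \ref{item:s2}, \ref{item:s1}) and the adjacencies between surviving classes being rigidly determined (complete or anti-complete).

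Once $G$ is identified as a blow-up of a concrete skeleton, the conclusion follows by the same degree-counting trick used at the ends of \autoref{lem:F1} and \autoref{lem:C6}. I would pick two vertices $a, b$ lying in cliques $Q_a, Q_b$ that together with their common neighborhoods cover a bounded amount, and argue a case split on whether some distinguished blown-up clique $Q_v$ has size at most $\omega(G)/2$. In one branch $N_G(a)$ minus a small set is a clique, giving $d_G(a) \le \omega(G) - 1 + \omega(G)/2 = \frac{3}{2}\omega(G) - 1$; in the complementary branch the same estimate works for $b$. The key is that $F_2$ contains the extra vertex $t$ adjacent to three cycle vertices, so the resulting skeleton should be \emph{more constrained} than the $F_1$ case (no analogue of $F$ or the Petersen graph survives), which is precisely why the conclusion is the stronger ``$G$ contains a small vertex'' with no universal-vertex or blow-up escape clauses.

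The main obstacle I anticipate is the interplay between the $C_6$-freeness and the surviving $3$-vertex classes. In \autoref{lem:C6} the sets $S(i-1,i,i+1,i+2)$ and the ``diagonal'' sets $S(i,i+3)$ were central, and here forbidding $C_6$ removes the six-cycle structure but I must be careful that no induced $C_6$ sneaks back in through the blow-up once I add vertices like $t$; conversely I must verify that ruling out $C_6$ does not inadvertently force $G$ to be empty of the very structure I need. The delicate step will be pinning down the adjacency of $t$ (and the whole class $S(5,1,2)$) to the classes $S(2,3)$, $S(4,5)$, $S(2,3,4)$, $S(3,4,5)$, and the remaining $3$-vertex sets, since this determines whether the skeleton is forced down to one with a guaranteed small vertex. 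I expect this to require invoking both forbidden subgraphs together in a single case, rather than either one alone.
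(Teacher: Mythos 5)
Your proposal has two genuine gaps. First, you assume $F_1$-freeness is available (``since $G$ is now additionally $F_1$-free and $C_6$-free\dots''), but the hypotheses of this lemma are only $(P_6,C_4,C_6)$-freeness: $F_1$-freeness is a hypothesis of \autoref{lem:C6}, not of this statement, and the paper's proof of this lemma never invokes it. A proof leaning on $F_1$-freeness establishes a strictly weaker lemma --- adequate for \autoref{thm:main}, where $F_1$ has already been eliminated earlier in the chain, but not the statement as given.

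Second, and more fundamentally, your structural target is unattainable: you plan to show that the adjacencies between the surviving classes $S(X)$ are rigidly determined (each pair complete or anti-complete), so that $G$ is a blow-up of a fixed small skeleton, and only then do degree counting. That rigidity fails here, and the paper does not prove it. What the paper establishes are \emph{per-vertex} dichotomies: each vertex of $S(5,1,2)$ is complete or anti-complete to $S_2$ (different vertices of $S(5,1,2)$ may behave differently); each vertex of $S(3,4,5)\cup S(4,5,1)$ is complete or anti-complete to $S(4,5)$ (symmetrically for $S(2,3)$); and for $y'\in S(2,3)$ only the subset $N(y')\cap S(2,3,4)$ is forced to be complete to $S(1,2,3)$. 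So the classes are not modules, and $G$ need not be a blow-up of any bounded skeleton --- which is why the degree count must be done differently. The paper sets $Q_i=S(i-1,i,i+1)\cup\{i\}$ and $K=S_5\cup S(5,1,2)$, shows $K\cup Q_2$ and $K\cup Q_5$ are cliques, and splits: either one of $Q_2,Q_5$ has size at most $\omega(G)/2$, making vertex $1$ small, or else $|S(5,1,2)|<\omega(G)/2$, in which case (using that one of $S(2),S(5)$ is empty, a consequence of $C_6$-freeness) the neighborhood of $x$ or $y$ is a clique together with a subset of $S(5,1,2)$, giving degree at most $\omega(G)-1+|S(5,1,2)|\le \frac{3}{2}\omega(G)-1$. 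Note also that the extremal choice of the copy of $F_2$ (maximizing $|S_2|$), which you invoke only vaguely, is indispensable at exactly one point: proving that $S(1,2,3)$ and $S(4,5,1)$ are complete to $S(5,1,2)$ via a cycle-replacement argument. As written, your bookkeeping phase aims at a structure that does not exist, so the concluding degree-counting step has no foundation.
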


\begin{proof} Let $G$ be a $(P_6,C_4,C_6)$-free atom that contains an induced subgraph 
$H$ that is isomorphic to $F_2$ with $V(H)=\{1,2,3,4,5,t,x,y\}$ such that $1,2,3,4,5,1$
induces the \emph{underlying} five-cycle $C$, and $t$ is adjacent to $5$, $1$ and $2$, $x$ is adjacent to $4,5$ and $y$
is adjacent to $2$ and $3$. Moreover, $t$ is adjacent to both $x$ and $y$, see \autoref{fig:F1F2}.
We partition $V(G)$ with respect to $C$. 
We choose the copy of $H$ in $G$ such that $C$ has $|S_2|$ maximized.
Note that $x\in S(4,5)$, $y\in S(2,3)$ and $t\in S(5,1,2)$.
Since $S(2,3)$ and $S(4,5)$ are not empty, it follows from \ref{item:S1S2empty} that $S_1 = S(2)\cup S(5)$.
If both $S(2)$ and $S(5)$ are not empty, say $u\in S(2)$ and $v\in S(5)$, then $u$ and $v$ are adjacent
by \ref{item:s1}, and so $u,2,3,4,5,v,u$ induces a $C_6$, contradicting our assumption that
$G$ is $C_6$-free. This shows that $S_1 = S(i)$ for some $i\in \{2,5\}$.
Now we argue that  $S_2=S(2,3)\cup S(4,5)$.
If $S(3,4)$ contains a vertex $z$, then $z$ is adjacent to $x$ and $y$
by \ref{item:s2}. This implies that either $\{t,5,4,z\}$ or $\{t,x,z,y\}$ induces a $C_4$, depending
on whether $t$ and $z$ are adjacent. So,  $S(3,4)=\emptyset$.
If $S(1,2)$ contains a vertex $z$,
then $z$ is adjacent to $y$ and so $1,z,y,3,4,5,1$ induces a $C_6$, a contradiction. This
shows that $S(1,2)=\emptyset$. By symmetry, $S(5,1)=\emptyset$.

\begin{enumerate}[label=\bfseries (\arabic*)]

\emitem {Each vertex in $S(5,1,2)$ is either complete or anti-complete to $S_2$.} \label{item:s512}

Let $t'\in S(5,1,2)$ be an arbitrary vertex. Suppose that $t'$ has a neighbor, say $x'$, in $S(4,5)$.
If $t'$ is not adjacent to a vertex $y'\in S(2,3)$, then $1,t',x',4,3,y'$ induces a $P_6$, 
since $x'$ and $y'$ are not adjacent to by \ref{item:s2}.
This shows that $t'$ is complete to $S(2,3)$.  In particular, $t'$ is adjacent to $y$.
Applying the same argument we conclude that $t'$ is also complete to $S(4,5)$. 
By symmetry, if $t'$ has a neighbor in $S(2,3)$, then $t'$ is also complete to
$S_2$.
\e

\emitem {$S(2,3)$ and $S(4,5)$ are cliques.} \label{item:s2clique}

By \ref{item:s512}, $t$ is complete to $S_2$. 
If $S(2,3)$ contains two non-adjacent vertices $y'$ and $y''$, then $\{t,y',y'',3\}$
induces a $C_4$, a contradiction. Therefore, $S(2,3)$ is a clique.
By symmetry, so is $S(4,5)$. 
\e

\emitem {Each vertex in $S(3,4,5)\cup S(4,5,1)$ is either complete or anti-complete to $S(4,5)$.
By symmetry, each vertex in $S(1,2,3)\cup S(2,3,4)$ is either complete or anti-complete to $S(2,3)$.}\label{item:s45}

Suppose first that $s\in S(3,4,5)$ is adjacent to a vertex $u\in S(4,5)$ and
non-adjacent to a vertex $v\in S(4,5)$. By \ref{item:s2clique}, $u$ and $v$ are adjacent. Then
$v, u, s, 3, 2, 1$ induces a $P_6$, a contradiction. So, $S(3,4,5)$ is either complete
or anti-complete $S(4,5)$. Note that the above argument does not use $t$. So,
$S(4,5,1)$ is either complete or anti-complete $S(4,5)$ by symmetry. \e

\emitem {$S(4,5)$ is anti-complete to $S(2,3,4)$. By symmetry, $S(2,3)$ is anti-complete to $S(3,4,5)$. }\label{item:s234s45}

Suppose that $s\in S(2,3,4)$ is adjacent to a vertex $x'\in S(4,5)$.
By \ref{item:s3}, $s$ and $t$ are not adjacent.
Then $\{s,x',t,2\}$ induces a $C_4$, a contradiction. 
\e

\emitem {$S(1,2,3)$ is complete to $S(5,1,2)$. By symmetry, $S(4,5,1)$ is complete to $S(5,1,2)$. }\label{item:s123s512}

Let $s\in S(1,2,3)$ and $t'\in S(5,1,2)$ be two arbitrary vertices. 
Note that $s$ and $x$ are not adjacent by \ref{item:s3antics2}.
By \ref{item:s512}, $t'$ is either complete or anti-complete to $S_2$.
If $t'$ is complete to $S_2$, then $s$ is adjacent to $t'$, for otherwise
$1,t',x,4,3,s,1$ induces a $C_6$ which contradicts that $G$ is $C_6$-free. 
So, we assume that $t'$ is anti-complete to $S_2$.  Suppose that $s$ is not adjacent to $t'$.
Then $H'=H\setminus \{1\}\cup \{t'\}$ is isomorphic to $F_2$ and its underlying five-cycle
is $C'=C\setminus \{1\}\cup \{t'\}$. 
Since $t'$ is anti-complete to $S_2\cup \{s\}$, the number of $2$-vertices with respect
to $C'$ is more than that with respect to $C$.
This contradicts the maximality of $H$.
\e

\emitem {Let $y'$ be an arbitrary vertex in $S(2,3)$. Then $y'$ is complete to $S(1,2,3)$,
and $N(y')\cap S(2,3,4)$ is complete to $S(1,2,3)$.
By symmetry,  any vertex in $S(4,5)$ is complete to $S(4,5,1)$ and the neighbors of that vertex
in $S(3,4,5)$ are complete to $S(4,5,1)$.} \label{item:nbr of y}

Let $s$ be an arbitrary vertex in $S(1,2,3)$. Then $s$ is adjacent to $t$ by \ref{item:s123s512}.
Recall that $t$ is complete to $S_2$ and so is adjacent to $y'$. If $y'$ is not adjacent to $s$,
then $\{s,t,y',3\}$ induces a $C_4$. This shows that $y'$ is complete to $S(1,2,3)$.
Now suppose that $y'$ has a neighbor $r\in S(2,3,4)$ that is not adjacent to $s\in S(1,2,3)$.
Then $s,1,5,4,r,y',s$ induces a $C_6$, a contradiction.
\e

\emitem {$S_5$ is complete to $S_2$.}\label{item:s5s2}

By symmetry, it suffices to show that $S_5$ is complete to $S(2,3)$. Let $u\in S_5$
and $y'\in S(2,3)$ be two arbitrary vertices. Note that $u$ and $t$ are adjacent by \ref{item:s5},
and that and $y' $and $t$ are adjacent by \ref{item:s512}.
If $u$ and $y'$ are not adjacent, then $\{u,t,y',3\}$ induces a $C_4$, a contradiction.
This proves the claim. \e
\end{enumerate}

Now we show that one of $1$, $x$ and $y$ is small. Let  $Q_i=S(i-1,i,i+1)\cup \{i\}$.
Note that $N_G(1) = K\cup Q_2\cup Q_5$, where $K=S_5\cup S(5,1,2)$.
Recall that $S_5$ is complete to $S_3$ by \ref{item:s5}.
It follows from \ref{item:s123s512} that $K\cup Q_2$ and $K\cup Q_5$ are cliques.
If one of $Q_2$ and $Q_5$ has size at most $\omega(G)/2$, then $d_G(1)\le \frac{3}{2}\omega(G)-1$.
So, we assume that $|Q_2|>\omega(G)/2$ and $|Q_5|>\omega(G)/2$.
This implies that $|S(5,1,2)|<\omega(G)/2$. Recall that  $S_1=S(i)$ for some $i\in \{2,5\}$.
By symmetry, we may assume that $S(2)=\emptyset$.
Note that $S(2,3)$ is anti-complete to $S(4,5,1) $ by \ref{item:s3antics2}.
So, by \ref{item:s234s45}, we have that $N_G(y)=K'\cup (N(y)\cap S(5,1,2))$
where $K'=Q_2\cup (N(y)\cap Q_3)\cup (S(2,3)\setminus \{y\})\cup S_5$.
By \ref{item:s45}, \ref{item:nbr of y} and \ref{item:s5s2}, $K'\cup \{y\}$ is a clique
and so $|K'|\le \omega(G)-1$. Therefore, $d_G(y)\le (\omega(G)-1)+|N(y)\cap S(5,1,2)|\le (\omega(G)-1)+|S(5,1,2)|
\le \frac{3}{2}\omega(G)-1$. This completes our proof.
\end{proof}

\begin{lemma}\label{lem:C5}
If a $(P_6,C_4,C_6,F_2)$-free atom $G$ contains a $C_5$, then $G$ contains a small vertex or a universal vertex.
\end{lemma}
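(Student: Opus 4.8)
The plan is to fix an induced five-cycle $C=1,2,3,4,5,1$ and partition $V(G)$ with respect to $C$ exactly as in \autoref{sec:C5}, so that properties \ref{item:partition}--\ref{item:s1s2} are at my disposal. Since $G$ is an atom, \autoref{lem:P6C4atom} makes $C$ dominating, so $S_0=\emptyset$, and \ref{item:partition} forces $S_4=\emptyset$; thus only $S_1$, the pair-classes $S(i,i+1)$, the triple-classes $S(i-1,i,i+1)$ and $S_5$ survive. A useful first observation is that $F_1$ itself contains an induced $C_6$ (for example the hole $y,2,1,5,z,x$ in the labelling of \autoref{fig:F1F2}), so $C_6$-freeness already yields $F_1$-freeness, and I may treat $G$ as $(P_6,C_4,C_6,F_1,F_2)$-free. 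I would then record the combinatorial content of the two forbidden configurations: by \ref{item:s2}, three cyclically consecutive pair-classes $S(i,i+1),S(i+1,i+2),S(i+2,i+3)$ can never be simultaneously nonempty (one vertex from each induces $F_1$), and no vertex of a triple-class $S(i-1,i,i+1)$ can be adjacent to vertices of both flanking pair-classes $S(i-2,i-1)$ and $S(i+1,i+2)$ (else it induces $F_2$).

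The heart of the argument is to turn the triple-classes, the $C$-vertices and $S_5$ into a small number of large cliques. As in \autoref{lem:F1} and \autoref{lem:C6}, the natural blocks are $Q_i:=S(i-1,i,i+1)\cup\{i\}$, each of which is a clique by \ref{item:s5}, with $S_5$ complete to every triple-class. I would re-derive, by the $P_6$/$C_4$ arguments already used and crucially by $F_2$-freeness, how the (by now sparse) pair-classes attach to the $Q_i$ and to $S_5$, with the aim of showing that the non-clique part of a well-chosen vertex's neighborhood is confined to a single $Q_j$ (or a single pair-class) of controllable size. The quantitative lever, exactly as in the earlier lemmas, is that two cliques lying in a common clique --- e.g.\ $Q_i$ and $Q_{i+1}$, or $Q_i$ together with an incident pair-class --- cannot both exceed $\omega(G)/2$.

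Finally I would split on $S_5$. Because $S_5$ is a clique complete to all triple-classes, a vertex $u\in S_5$ can fail to be universal only by missing some $1$- or $2$-vertex; if some $u\in S_5$ is complete to $S_1\cup S_2$ we are done. Otherwise, and when $S_5=\emptyset$, I would choose a $C$-vertex $i$ (or a suitable $2$-vertex) whose neighborhood decomposes as a clique together with one module $Q_j$: if $|Q_j|\le\omega(G)/2$ then $d_G(i)\le(\omega(G)-1)+\omega(G)/2=\tfrac32\omega(G)-1$ and $i$ is small, while if $|Q_j|>\omega(G)/2$ the clique-pairing forces a partner clique below $\omega(G)/2$ and a second, symmetric vertex becomes small --- the same $|Q_2|\le\omega(G)/2$ versus $|Q_2|>\omega(G)/2$ dichotomy that closes the cases of \autoref{lem:F1} and \autoref{lem:C6}. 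The step I expect to fight hardest with is the clique analysis of the triple-classes: unlike the $F_1$ setting, two consecutive triple-classes need not be forced complete by a single short path (a non-adjacent pair $a\in S(5,1,2)$, $b\in S(1,2,3)$ creates no immediate $P_6$, $C_4$, $C_6$, $F_1$ or $F_2$), so I anticipate having to lean on $F_2$-freeness and on the atom hypothesis --- ruling out a clique cutset through the shared edge $\{1,2\}$ --- to eliminate such a pair, and on the ``no three consecutive pair-classes'' and ``no flanking pair'' constraints to keep the $2$-vertices from spreading a chosen vertex's neighborhood into a non-clique. Picking the right vertex to count at, and the right rotation to invoke symmetry, is the delicate bookkeeping that remains.
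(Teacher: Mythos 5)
Your setup (the partition with respect to $C$, the blocks $Q_i=S(i-1,i,i+1)\cup\{i\}$, degree counting against two cliques that cannot both exceed $\omega(G)/2$, splitting on $S_5$) matches the paper's framework, and your two preliminary observations are correct: $F_1$ contains the induced $C_6$ given by $y,2,1,5,z,x$, so $F_1$-freeness indeed comes for free here, and the two configurations you describe (one vertex in each of three consecutive pair-classes; a triple-class vertex with neighbors in both flanking pair-classes) do produce an induced $F_1$ resp.\ $F_2$. But the proposal stops exactly where the proof begins: the two mechanisms that actually carry the paper's argument are absent, and the substitute you suggest for one of them provably cannot work.

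Concretely: (a) you correctly identify that a non-adjacent pair $a\in S(5,1,2)$, $b\in S(1,2,3)$ creates no forbidden subgraph, and you propose to eliminate such pairs using the atom hypothesis (``ruling out a clique cutset through the shared edge $\{1,2\}$''). This is impossible: take $C_5$ together with exactly two further vertices $a$ (adjacent to $5,1,2$) and $b$ (adjacent to $1,2,3$), with $a,b$ non-adjacent. This $7$-vertex graph is a $(P_6,C_4,C_6,F_2)$-free atom (it is $2$-connected and no edge or triangle separates it), so no contradiction of any kind is available; consecutive triple-classes simply need not be complete to each other. The paper never claims they are: it chooses $C$ \emph{extremally}, minimizing $|S_3|$, and proves only the conditional statement \ref{clm:s3} --- completeness of $S(i-1,i,i+1)$ to $S(i-2,i-1,i)\cup S(i,i+1,i+2)$ \emph{when} $S(i-2,i-1)\cup S(i+1,i+2)=\emptyset$ --- by swapping $i$ for the vertex $s$ and observing that minimality of $|S_3|$ forces a new $3$-vertex, which can only lie in $S(i-2,i-1)\cup S(i+1,i+2)$. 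This extremal choice of the cycle is the key idea your outline lacks (in the example above, the minimizing cycle is not $C$ but the one through $a$). (b) The bulk of the paper's proof consists of two reduction claims about the low-degree classes, with no counterpart in your outline: if some $S(i)\neq\emptyset$ is anti-complete to $S_2$, then $G$ contains an induced $F_2$ (claim \ref{clm:s1}, proved via a clique-cutset argument applied to a component of $S(i)$); and if some $S(i,i+1)\neq\emptyset$ is anti-complete to $S_1$, then $G$ contains a small vertex (claim \ref{clm:s2}, which needs the $X_2/X_5$ partition of $S(3,4)$, two further clique-cutset arguments, and the chordality of $S(3,4)$ to locate the vertex at which to count). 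The paper's endgame ($S_2\neq\emptyset$ versus $S_2=\emptyset$, then $S_1$, then $S_5$) runs entirely on these two claims together with \ref{clm:s3}; without them, the ``well-chosen vertex'' whose neighborhood is a clique plus one block of size at most $\omega(G)/2$ cannot be exhibited. So the proposal is a reasonable plan sharing the paper's general strategy, but it is not a proof, and its one concrete new suggestion is refuted by a small example.
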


\begin{proof}
Let $C=1,2,3,4,5,1$ be an induced $C_5$ of $G$. 
We partition $V(G)\setminus C$
with respect to $C$.  
We choose $C$ such that $|S_3|$ is minimized.
We first prove the following claim which makes use of the choice of $C$.

\begin{enumerate}[label=\bfseries (\arabic*)]

\emitem {For each $1\le i\le 5$, if $S(i-2,i-1)\cup S(i+1,i+2)=\emptyset$, 
then $S(i-1,i,i+1)$ is complete to $S(i-2,i-1,i)\cup S(i,i+1,i+2)$.}\label{clm:s3}

By symmetry, it suffices to prove the claim for $i=1$.
Suppose now that $S(5,1,2)$ is not complete to $S(4,5,1)\cup S(1,2,3)$.
Then there exist vertices $s\in S(5,1,2)$ and $t\in S(4,5,1)\cup S(1,2,3)$
that are not adjacent. Consider the induced five-cycle $C'=C\setminus \{1\}\cup \{s\}$.
Note that $t$ is not a $3$-vertex with respect to $C'$. By the choice of $C$,
there must exist a vertex $r\in V(G)$ that is a $3$-vertex for $C'$ 
but not for $C$. It is routine to check that such a vertex $r$ must necessarily 
lie in $S(2,3)\cup S(4,5)$. However, this contradicts our assumption that
$S(2,3)\cup S(4,5)=\emptyset$. \e

\emitem {For any $1\le i\le 5$, if $S(i)\neq \emptyset$ and it is anti-complete to $S_2$, then $G$ contains an induced $F_2$.}\label{clm:s1}

By symmetry, it suffices to show the claim for $i=1$.
Let $u\in S(1)$. 
By \ref{item:s1}, $S(1)$ is anti-complete to $S(2)\cup S(5)$. If $u_1\in S(1)$
is adjacent to $u_3\in S(3)$, then $u_1$ is adjacent to $u_3$ by \ref{item:s1} and so $1, u_1, u_3, 4, 5, 1$ induces a $C_6$. 
So, $S(1)$ is anti-complete to $S(3)$. By symmetry, $S(1)$ is anti-complete to $S(4)$.
Moreover, $S(1)$ is anti-complete to $S(2,3,4)\cup S(3,4,5)$ by \ref{item:s1antics3}.
 Thus, the neighbors of vertices in $S(1)$
are in $S(4,5,1)\cup S(5,1,2)\cup S(1,2,3)$ and $S_5$.
\\
\begin{enumerate}[label=\bfseries (\alph*)]
\emitem {$S(1)$ is complete to $S(5,1,2)$.}\label{itm:s1s512}

This follows directly from \autoref{lem:P6C4atom}. \e

\emitem {Every vertex in $S(4,5,1)\cup S(1,2,3)$ is either complete or anti-complete to each component of $S(1)$.}\label{s451s123}

By symmetry, we prove this for $S(4,5,1)$. Suppose that some vertex $t\in S(4,5,1)$ is neither complete nor anti-complete
to a component $A$ of $S(1)$. Then by the connectivity of $A$, there exists an edge $aa'$ in $A$ such that $t$ is
adjacent to $a$ but not to $a'$. Then $a',a,t,4,3,2$ induces a $P_6$. \e

\emitem {Let $A$ be a component of $S(1)$. Then the set of neighbors of $A$ in $S(4,5,1)$ is complete to $S(5,1,2)$.
By symmetry, the set of neighbors of $A$ in $S(1,2,3)$ is complete to $S(5,1,2)$. }\label{itm:s451s512}

Let $s\in S(4,5,1)$ be a neighbor of $A$, and $r\in S(5,1,2)$ be an arbitrary vertex.
Let $a$ be an arbitrary vertex in $A$. 
Note that $a$ and $r$ are adjacent by \ref{itm:s1s512}.
If $s$ and $r$ are not adjacent, then $\{5,r,a,s\}$ induces a $C_4$  \e 
\end{enumerate}

\bigskip
Let $A$ be the component of $S(1)$ containing $u$,
$X\subseteq S(4,5,1)$ be the set of neighbors of $A$
in $S(4,5,1)$, and $Y\subseteq S(1,2,3)$ be the set of neighbors of $A$
in $S(1,2,3)$. Note that both $X$ and  $Y$ are cliques by \ref{item:s5}.
It follows from \ref{itm:s451s512} that $N(A)\setminus X$
and $N(A)\setminus  Y$ are cliques. Since $G$ has no clique cutsets,
both $X$ and $Y$ are not empty, say $x\in X$ and $y\in  Y$.
By \ref{s451s123}, $u$ is adjacent to $x$ and $y$.
Now $C\cup \{u,x,y\}$ induces a $F_2$.  \e

\emitem {If $S(i,i+1)\neq \emptyset$ and is anti-complete to $S_1$, then $G$ contains a small vertex.}\label{clm:s2}

By symmetry, assume that $S(3,4)\neq \emptyset$.
It follows from \ref{item:s2} and the $C_6$-freeness of $G$ that
$S_2=S(3,4)\cup S(i,i+1)$ for some $i\in \{5,1\}$, and $S(3,4)$ is anti-complete  $S_2\setminus S(3,4)$.
\\
\begin{enumerate}[label=\bfseries (\alph*)]
\emitem {Any vertex in $S(3,4)$ is anti-complete to either $S(4,5,1)$ or $S(1,2,3)$.}\label{itm:s34}

Suppose not. Let $x\in S(3,4)$ be a vertex that has a neighbor $s\in S(1,2,3)$ and $t\in S(4,5,1)$.
Note that $s$ and $t$ are not adjacent by \ref{item:s3}.
Then $\{1,s,x,t\}$ induces a $C_4$, a contradiction. \e

\emitem{Each vertex in $S(3,4)$ has a neighbor in $S(4,5,1)\cup S(1,2,3)$.}\label{itm:s34nbr}

Let $Y\subseteq S(3,4)$ be the set of vertices that have a neighbor in $S(4,5,1)\cup S(1,2,3)$,
and $S'(3,4)=S(3,4)\setminus  Y$. Suppose that \ref{itm:s34nbr} does not hold, namely $S'(3,4)\neq \emptyset$.
We shall show that $G$ contains a clique cutset, which is a contradiction.
Let $b\in S'(3,4)$ and $B$ be the component of $S'(3,4)$ containing $b$. 
\\
\\
Let $x\in  S(2,3,4)\cup S(3,4,5)\cup Y$ be a vertex that is adjacent to a vertex
$b'\in B$ and not adjacent to a vertex $b''\in B$ such that $b'$ and $b''$ are
adjacent. If $x\in S(2,3,4)$, then $b'', b', x, 2, 1, 5$ induces a $P_6$; 
if $x\in S(3,4,5)$, then $b'', b', x, 5, 1, 2$ induces a $P_6$. If $x\in Y$ , let $t\in S(4,5,1)\cup S(1,2,3)$ be
a neighbor of $x$. If $t\in S(4,5,1)$, then $b'',b',x,t,1,2$ induces a $P_6$, and if
$t\in S(1,2,3)$, then $b'',b',x,t,1,5$ induces a $P_6$. This shows that any vertex
in $S(2,3,4)\cup S(3,4,5)\cup Y$  is either complete or anti-complete to $B$.
Let $Y'$, $S'(2,3,4)$ and $S'(3,4,5)$ be the subsets of $Y$, $S(2,3,4)$
and $S(3,4,5)$, respectively that are complete to $B$.
Let $V'=\{3,4\}\cup S'(2,3,4)\cup S'(3,4,5)$.
If $p\in S'(2,3,4)$ is not adjacent to $q\in S'(3,4,5)$, then $p,b,q,5,1,2,p$ induces a $C_6$,
a contradiction.  Recall that $S(i-1,i,i+1)$ is a clique by \ref{item:s5}. This shows that $V'$ is a clique.
\\
\\
Now let $y_1,y_2\in  Y'$ be two arbitrary and distinct vertices. Suppose that
$y_1$ and $y_2$ are not adjacent. Let $x_i\in S(4,5,1)\cup S(1,2,3)$ be a neighbor of $y_i$
for $i=1,2$. If $x_1=x_2$, then $\{b,y_1,x_1,y_2\}$ induces a $C_4$.
So, $x_1\neq x_2$ and this implies that $x_1$ (resp. $x_2$) is not adjacent to $y_2$
(resp. $y_1$).
We may assume by symmetry that $x\in S(4,5,1)$. Then $y_1, b, y_2, x_2, 1, 2$ induces a $P_6$.
So, $Y'$ is a clique.
Suppose now that $y\in Y'$ is not adjacent to $s\in S(3,4,5)$.
Let $x\in S(4,5,1)\cup S(1,2,3)$ be a neighbor of $y$.
Since $\{s,b,y,x\}$ does not induce a $C_4$,  it follows that $s$ and $x$ are not adjacent. 
If $x\in S(4,5,1)$, then $s,b,y,x,1,2$ induces a $P_6$; if $x\in S(1,2,3)$,
then $s,b,y,x,1,5$ induces a $C_6$. Therefore, $Y'$ is complete to $V'$.
So, we have proved that $Y'\cup V'$ is a clique. Clearly, $S_5\cup V'$ is a clique.
Note that $B$ is anti-complete to $S(5,1,2)$ by \ref{item:s3antics2}, 
and  is anti-complete to $S_1$ by our assumption. Thus, all possible neighbors of
$B$ are in $S_5\cup V'\cup Y'$.
Since $G$ has no clique cutsets, $B$ must have a pair $\{u,y\}$ of neighbors that are not adjacent,
where $u\in S_5$ and $y\in Y'$. But now $\{b,y,x,u\}$ induces a $C_4$,
where $x\in S(4,5,1)\cup S(1,2,3)$ is a neighbor of $y$.
This completes the proof.
\e

\end{enumerate}

\bigskip
By \ref{itm:s34} and \ref{itm:s34nbr}, we can partition $S(3,4)$ into 2 subsets:
$$X_2=\{y\in S(3,4): y \text{ has a neighbor in } S(1,2,3)\},$$
$$X_5=\{y\in S(3,4): y \text{ has a neighbor in } S(4,5,1)\}.$$
Note that $X_2$ is anti-complete to $S(4,5,1)$ and $X_5$ is anti-complete to $S(1,2,3)$.
\\
\begin{enumerate}[label=\bfseries (\alph*)]
\setcounter{enumii}{2}

\emitem {$X_2$ is anti-complete to $X_5$.}\label{itm:x2x5}

Suppose that $y\in X_2$ and $z\in X_5$ are adjacent. 
Let $t_2\in S(1,2,3)$ and $t_5\in S(4,5,1)$ be neighbors of $y$ and $z$, respectively.
Then $5,t_5,z,y,t_2,2$ induces a $P_6$, a contradiction. \e

\emitem {If both $X_2$ and $X_5$ are not empty, then $G$ contains an induced $F_2$.}\label{itm:empty}

Let $x\in X_5$ and $y\in X_2$. By \ref{itm:s34nbr}, $x$ has a neighbor $t\in S(4,5,1)$
and $y$ has a neighbor $s\in S(1,2,3)$. Note also that $x$ and $y$ are not adjacent by \ref{itm:x2x5}.
Now $(C\setminus \{5\})\cup \{x,y,s,t\}$ induces a $F_2$ (whose underlying five-cycle is $y,s,1,t,4,y$).  \e

\end{enumerate}

By \ref{itm:empty} the fact that $G$ is $F_2$-free, we may assume, without loss of generality, that $X_5=\emptyset$.
In other words, every vertex in $S(3,4)$ has a neighbor in $S(1,2,3)$. 
Let $x'\in S(3,4)$ and $s'\in S(1,2,3)$ be a neighbor of $x'$.
If $S(1,5)$ contains a vertex $y$, then either $2,s',x',4,5,y$ induces a $P_6$
or $C\cup \{x',y,s'\}$ induces a subgraph isomorphic to $F_2$, depending on whether $s'$ and $y$
are adjacent. This shows that $S(1,5)=\emptyset$.
So, $S_2=S(3,4)\cup S(1,2)$.
\\
\begin{enumerate}[label=\bfseries (\alph*)]
\setcounter{enumii}{4}

\emitem {$S(3,4)$ is complete to $S_5$.}\label{itm:s5s34}

Suppose not. We may assume that $x\in S(3,4)$ is not adjacent to some vertex
$u\in S_5$. Let $t\in S(1,2,3)$ be a neighbor of $x$.
Then $\{u,t,x,4\}$ induces a $C_4$, a contradiction. \e

\emitem {Each vertex in $S(2,3,4)$ is either complete or anti-complete to any component of $S(3,4)$.}\label{itm:s234s345}  

Suppose that $t\in S(2,3,4)$ is neither complete nor anti-complete some component $A$ of $S(3,4)$.
Then by the connectivity of $A$, there exists an edge $yz$ in $A$ such that $t$ is adjacent to $y$ but not to $z$.
Now $z,y,t,2,1,5$ induces a $P_6$, a contradiction. \e

\emitem {$S(3,4)$ is  complete to $S(3,4,5)$.}\label{itm:s345s34}

Suppose that $x\in S(3,4)$ and $t\in S(3,4,5)$ are not adjacent.
Let $s\in S(1,2,3)$ be a neighbor of $x$. 
Note that $s$ is not adjacent to $t$ by \ref{item:s3}.
Then $C\cup \{x,s,t\}$
induces a subgraph isomorphic to $F_2$ whose underlying five-cycle is $x,s,1,5,4,x$. This is a contradiction. \e

\emitem {$S(3,4,5)$ is complete to $S(2,3,4)$ and $S(4,5,1)$, and $S(1,2,3)$ is complete to $S(5,1,2)$.}\label{itm:s3complete}

Recall that $S(1,5)\cup S(2,3)=\emptyset$ and $S(2,3)\cup S(4,5)=\emptyset$. Thus, \ref{itm:s3complete}
follows from \ref{clm:s3}.
\e

\emitem {$S(3,4)$ is $P_4$-free.}\label{itm:s34p4free}

Suppose that $P=x_1,x_2,x_3,x_4$ is an induced $P_4$ in $S(3,4)$.
Let $s\in S(1,2,3)$ be a neighbor of $x_4$. 
Note that $s$ is adjacent to neither $x_1$ nor $x_2$, for otherwise $\{s,x_i,4,x_4\}$ induces a $C_4$
for some $i\in \{1,2\}$.  Now $P\cup \{s,1,5\}$ contains a $P_6$. \e
\end{enumerate}

\bigskip
By \ref{itm:s34p4free} and the fact that $G$ is $C_4$-free, $S(3,4)$ is chordal.
So, $S(3,4)$ contains a vertex $x$ such that $X=N(x)\cap S(3,4)$ is a clique.
Now we show that either $x$ or $5$ is small. For each $i$, let $Q_i=S(i-1,i,i+1)\cup \{i\}$
and $S'(i-1,i,i+1)=N(x)\cap S(i-1,i,i+1)$. 
Note that $S(3,4)$ is anti-complete to $S(5,1,2)$ by \ref{item:s3antics2}, and is anti-complete to $S_1$ by our assumption.
Moreover, recall that $S_2=S(3,4)\cup S(1,2)$, and that $S(3,4)$ is anti-complete to $S(1,2)$ by \ref{item:s2}.
It follows from \ref{itm:s5s34} and \ref{itm:s345s34} that
$N_G(x)=S_5\cup S(3,4,5)\cup S'(2,3,4)\cup X\cup S'(1,2,3)$.
It follows from \ref{itm:s5s34}-\ref{itm:s3complete} that $N_G(x)\setminus S'(1,2,3)$ is a clique.
If $|Q_2|\le \omega(G)/2$, then 
$d_G(x)= |N_G(x)\setminus S'(1,2,3)|+|S'(1,2,3)|\le \omega(G)-1+|Q_2|\le \frac{3}{2}\omega(G)-1$.
Otherwise, $|Q_2|>\omega(G)/2$. 
By \ref{itm:s3complete}, $|Q_1|<\omega(G)/2$. Note that $S(5)=\emptyset$ by \ref{item:S1S2empty}.
Then $d_G(5)=|S(4,5,1)\cup Q_4\cup S_5|+|Q_1|< \omega(G)-1+\omega(G)/2=\frac{3}{2}\omega(G)-1$,
since $S(4,5,1)\cup Q_4\cup S_5$ is a clique by \ref{itm:s3complete}.
This completes the proof of \ref{clm:s2}. \e

\end{enumerate}

We now complete the proof of the lemma as follows.
Suppose first that $S_2\neq \emptyset$, say $S(3,4)\neq \emptyset$.
Let $x\in S(3,4)$. By \ref{clm:s2}, $x$ has a neighbor 
$u\in S_1$. By \ref{item:s1s2}, $u\in S(1)$.
Thus, $S_1=S(1)$, and $S_2=S(3,4)\cup S(i,i+1)$ for some $i\in \{5,1\}$.
If $S(5,1)$ or $S(1,2)$ is not empty, then by \ref{clm:s2} the fact that $S(1)$ is anti-complete to $S(1,2)\cup S(5,1)$
it follows that $G$ contains a small vertex.
So, $S_2=S(3,4)$. Note that $N_G(i)\subseteq S_3\cup S_5$ for $i=2,5$ since $S(3,4)\neq \emptyset$.
Furthermore, it follows from \ref{clm:s3} that $S(i-1,i,i+1)$
is complete to $S(i,i+1,i+2)$ for each $i$. 
If $S(3,4,5)\cup \{4\}\le \omega(G)/2$, then $d_G(5)\le \frac{3}{2}\omega(G)-1$;
otherwise $S(2,3,4)\cup \{3\}<\omega(G)/2$ and thus $d_G(2)\le \frac{3}{2}\omega(G)-1$.
 Therefore, either $2$ or $5$ is a small vertex.
 
Therefore, we assume that $S_2=\emptyset$. If $S(i)\neq \emptyset$ for some $i$, then $G$ contains an induced 
$F_2$ by \ref{clm:s1} and this contradicts our assumption that $G$ is $F_2$-free.
So, $S_1=\emptyset$. If $S_5\neq \emptyset$, then $G$ contains a universal vertex by \ref{item:s5} and we are done.
Otherwise $G$ is a blow-up of $C$ and so contains a small vertex.
This completes our proof of the lemma.
\end{proof}

We are now ready to prove \autoref{thm:main}.

\begin{proof}[of \autoref{thm:main}]
 Let $G$ be a $(P_6,C_4)$-free atom.
It follows from \autoref{lem:F1}--\autoref{lem:C5} that we can assume that $G$
is also $(C_6,C_5)$-free. Therefore, $G$ is chordal. It is well-known \cite{Di61} that every chordal
graph contains a vertex of degree at most $\omega(G)-1$ and so this vertex is small. 
This completes the proof.
\end{proof}

\section{$\chi$-Bounding $(P_6,C_4)$-free graphs}\label{sec:3/2}

In this section, we shall prove the main result of this paper, that is, 
every  $(P_6,C_4)$-free graph has $\chi\le \frac{3}{2}\omega$.
For that purpose, we need one additional lemma.

\begin{lemma}\label{lem:blowup}
Let $G$ be a graph
and let $H$ be the skeleton of $G$. If $\chi(H)\le 3$,
then $\chi(G)\le \frac{3}{2}\omega(G)$.
\end{lemma}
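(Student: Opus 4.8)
The plan is to exploit the blow-up structure directly. Recall that $G$ is a blow-up of its skeleton $H$: each vertex $v \in V(H)$ corresponds to a clique $K_v$ in $G$, with $K_v$ complete to $K_u$ when $uv \in E(H)$ and anticomplete otherwise. The key observation I would make is that cliques of $G$ are in bijection with cliques of $H$ together with a choice of how many vertices to take from each blown-up clique: if $Q$ is a clique of $H$, then $\bigcup_{v \in Q} K_v$ is a clique of $G$, and every clique of $G$ arises this way. Consequently $\omega(G) = \max_{Q}\sum_{v \in Q}|K_v|$ where $Q$ ranges over cliques of $H$. I would isolate this as the quantitative fact driving the bound.

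Next I would color $G$ by lifting a coloring of $H$. Since $\chi(H) \le 3$, fix a proper $3$-coloring of $H$ with color classes (independent sets) $I_1, I_2, I_3$. Each $I_j$ is an independent set in $H$, so the corresponding vertex set $\bigcup_{v \in I_j} K_v$ in $G$ is a disjoint union of cliques (pairwise anticomplete). A disjoint union of cliques is a perfect graph, so I can color $\bigcup_{v \in I_j} K_v$ using exactly $\max_{v \in I_j}|K_v|$ colors, reusing the palette across the distinct cliques. The total number of colors is then at most $\sum_{j=1}^{3} \max_{v \in I_j}|K_v|$, using disjoint palettes for the three classes.

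The heart of the argument is then to bound $\sum_{j=1}^{3} \max_{v \in I_j}|K_v|$ by $\tfrac{3}{2}\omega(G)$. Let $a_j = \max_{v \in I_j}|K_v|$ and reorder so that $a_1 \ge a_2 \ge a_3$; say the maxima are attained at $v_1 \in I_1$ and $v_2 \in I_2$. The crucial point is that the two largest blown-up cliques can be combined: I want to argue that $a_1 + a_2 \le \omega(G)$, which would give $\sum_j a_j \le (a_1 + a_2) + a_3 \le \omega(G) + \tfrac{1}{2}\omega(G) = \tfrac{3}{2}\omega(G)$, since $a_3 \le \omega(G)/2$ as $a_2 + a_3 \le a_1 + a_2 \le \omega(G)$ forces $2a_3 \le \omega(G)$.

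The main obstacle, and the step requiring genuine care, is establishing $a_1 + a_2 \le \omega(G)$: this needs $v_1 v_2 \in E(H)$, i.e.\ that the two largest-weight vertices are adjacent, which is not automatic. I expect the resolution is to choose the \emph{three-coloring itself} favorably, or more robustly to argue over all pairs: among the three maxima, some pair $v_i, v_j$ from distinct color classes must be adjacent in $H$ (otherwise the three witnesses would form an independent triple, but we only need one adjacent pair among the \emph{two largest}). The clean way I would pursue is to let $v_1, v_2$ be the two vertices of $H$ with the largest blow-up cliques overall; if $v_1 v_2 \in E(H)$ then $|K_{v_1}| + |K_{v_2}| \le \omega(G)$ and I assign $v_1, v_2$ to distinct color classes and bound the third class's maximum by $\omega(G)/2$. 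If $v_1 v_2 \notin E(H)$, then $K_{v_1}$ and $K_{v_2}$ are anticomplete and can share a palette, so the two largest cliques contribute only $\max(|K_{v_1}|,|K_{v_2}|) \le \omega(G)$ rather than their sum. Handling this dichotomy carefully — ensuring the $3$-coloring is compatible with whichever case occurs so that the per-class maxima telescope correctly — is the delicate part of the proof.
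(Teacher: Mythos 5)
Your reduction of the lemma to the inequality $\sum_{j=1}^{3}\max_{v\in I_j}|K_v|\le \frac{3}{2}\omega(G)$ contains a genuine gap, and it cannot be repaired by choosing the $3$-coloring favorably or by your dichotomy on the two largest blow-up cliques. The obstruction you half-identify (the per-class maxima may be attained on pairwise nonadjacent vertices) can be \emph{forced}. Let $H$ be the $3$-sun: a triangle $x,y,z$ plus an independent set $\{a,b,c\}$ with $a$ adjacent to $y,z$, $b$ adjacent to $x,z$, and $c$ adjacent to $x,y$. Then $\chi(H)=3$, but in every $3$-coloring of $H$ the triangle is rainbow and $a,b,c$ are forced to receive the colors of $x,y,z$ respectively; so $a,b,c$ lie in three distinct color classes in \emph{every} $3$-coloring, although they are pairwise nonadjacent. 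Now blow $a,b,c$ up into cliques of size $k$ and keep $x,y,z$ as single vertices. Here $\omega(G)=k+2$ (e.g.\ $K_a\cup\{y,z\}$), yet every $3$-coloring of $H$ gives $\sum_j\max_{v\in I_j}|K_v|\ge 3k$, which exceeds $\frac{3}{2}(k+2)$ as soon as $k\ge 3$. So no choice of $3$-coloring makes your accounting close. Your fallback—letting $K_{v_1}$ and $K_{v_2}$ share a palette when $v_1v_2\notin E(H)$—does not rescue it: here all three of $K_a,K_b,K_c$ would have to share one palette across three different classes, and once palettes are shared across classes, ``sum of per-class maxima'' no longer bounds the number of colors used, because the remaining vertices of each class (here $x,y,z$) are adjacent to shared-palette cliques of \emph{other} classes and need colors outside that palette. (The lemma itself is fine for this $G$: one can check $\chi(G)=k+2=\omega(G)$, but any witnessing coloring interleaves palettes across the classes of $H$ in a way your framework does not account for.)

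The paper's proof avoids fixing any global coloring of $H$. It argues by induction on $|V(G)|$: reduce to $G$ connected, note that every maximal clique of $G$ is the blow-up of a maximal clique of $H$, which has size at least $2$ since $H$ is connected; hence deleting one representative from each twin class (i.e.\ one copy of the skeleton) yields $G'=G-H$ with $\omega(G')\le\omega(G)-2$. Coloring $G'$ inductively with $\frac{3}{2}\omega(G')$ colors and giving the peeled copy of $H$ three fresh colors yields $\chi(G)\le \frac{3}{2}(\omega(G)-2)+3=\frac{3}{2}\omega(G)$. This ``peel a skeleton, spend $3$ colors, gain $2$ on $\omega$'' exchange is the mechanism your per-class accounting is missing; if you want to salvage your write-up, the cleanest fix is to abandon the lifted-coloring bound and adopt this peeling induction.
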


\begin{proof}
We prove this by induction on $|V(G)|$.
The base case is that $G$ is its own skeleton.
Our assumption implies that $\chi(G)\le 3$. If $\omega(G)\ge 2$, then
it follows that $\chi(G)\le \frac{3}{2}\omega(G)$.
Otherwise, $\omega(G)=1$, i.e., $G$ is an independent set.
So, $\chi(G)=1<\frac{3}{2}\omega(G)$.
Now suppose that the lemma is true for any graph $G'$ with
$|V(G')|<|V(G)|$. 
If $G$ is disconnected, then the lemma follows from applying the inductive hypothesis to each connected component of $G$.
So, $G$ is connected and thus $H$ is also connected.
Then any vertex of $H$ lies in a maximal clique of $H$ with size at least 2. 
Note that every maximal clique of $G$ consists of a maximal clique of $H$ with all its twins.
This implies that $\omega(G')\le \omega(G)-2$,
where $G'=G-H$.
Note that the skeleton $H'$ of $G'$ is an induced subgraph of $H$
and so $\chi(H')\le 3$.
By the inductive hypothesis it follows that $\chi(G')\le \frac{3}{2}\omega(G')$.
Finally,
\[\chi(G)\le \chi(G')+\chi(H)\le \frac{3}{2}\omega(G')+\chi(H)\le \frac{3}{2}(\omega(G)-2)+3=\frac{3}{2}\omega(G).\]
This completes our proof.
\end{proof}

Now we are ready to prove the main result of this paper.
\begin{theorem}\label{thm:main2}
Every $(P_6,C_4)$-free graph $G$ has $\chi(G)\le \frac{3}{2}\omega(G)$.
\end{theorem}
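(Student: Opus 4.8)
The plan is to reduce the statement to the structural and blow-up machinery already developed. The key observation is that clique cutsets behave well with respect to coloring: if $K$ is a clique cutset of $G$ separating $G$ into parts $A$ and $B$ (with $K\subseteq A\cap B$ in the usual overlapping decomposition), then $\chi(G)=\max\{\chi(G[A]),\chi(G[B])\}$, since a coloring of each part can be permuted to agree on the clique $K$. Because the class of $(P_6,C_4)$-free graphs is hereditary, both $G[A]$ and $G[B]$ are again $(P_6,C_4)$-free, and each has strictly fewer vertices. Moreover $\omega(G[A]),\omega(G[B])\le\omega(G)$. So the first step is to set up an induction on $|V(G)|$ and dispose of the case where $G$ has a clique cutset by recursing on the two smaller pieces; the bound $\chi\le\frac32\omega$ is inherited directly.

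\emph{Next I would} handle the base case of the induction, namely when $G$ is an \emph{atom} (has no clique cutset), by invoking \autoref{thm:main}. That theorem tells us $G$ falls into one of four cases. If $G$ contains a universal vertex $v$, then $\chi(G)=\chi(G-v)+1$ and $\omega(G)=\omega(G-v)+1$; since $G-v$ is a smaller $(P_6,C_4)$-free graph, induction gives $\chi(G-v)\le\frac32\omega(G-v)=\frac32(\omega(G)-1)$, so $\chi(G)\le\frac32\omega(G)-\frac12<\frac32\omega(G)$. If $G$ contains a small vertex $v$, i.e.\ $d_G(v)\le\frac32\omega(G)-1$, then a greedy argument works: color $G-v$ inductively with at most $\frac32\omega(G-v)\le\frac32\omega(G)$ colors, and since $v$ has at most $\frac32\omega(G)-1$ neighbors, some color among the $\lceil\frac32\omega(G)\rceil$ available remains free for $v$. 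One must be slightly careful that the number of available colors genuinely exceeds $d_G(v)$, but $d_G(v)\le\frac32\omega(G)-1$ guarantees this.

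\emph{The remaining two cases} are where \autoref{lem:blowup} does the work: if $G$ is a blow-up of the Petersen graph or of the graph $F$, then its skeleton is (respectively) the Petersen graph or $F$, both of which are $3$-chromatic triangle-free graphs. Since $\chi(\text{skeleton})\le 3$ in either case, \autoref{lem:blowup} immediately yields $\chi(G)\le\frac32\omega(G)$. Here I would simply note that the Petersen graph and $F$ are each $3$-colorable (the Petersen graph is famously $3$-chromatic, and $F$ is easily checked), which is all the lemma needs.

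\emph{The main obstacle} I anticipate is the clique-cutset induction bookkeeping: one must verify that splitting at a clique cutset really preserves $(P_6,C_4)$-freeness and does not increase $\omega$, and that the max-of-two-colorings identity holds when the two colorings are merged along the cutset clique. This is standard but needs the observation that a clique is perfectly colorable and its colors can be relabeled freely in each part. Everything else—the universal-vertex and small-vertex reductions, and the two blow-up cases—follows mechanically from \autoref{thm:main} and \autoref{lem:blowup}, so the heart of the argument is really just organizing the induction so that \autoref{thm:main} can be applied to the atom at the bottom.
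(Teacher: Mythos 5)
Your proposal is correct and follows essentially the same route as the paper's own proof: induction on $|V(G)|$, recursing across clique cutsets, deleting a universal or small vertex and extending greedily, and finishing the two blow-up cases via \autoref{lem:blowup} with $\chi(\text{Petersen})=\chi(F)=3$. Two cosmetic slips worth fixing: in the small-vertex step the palette should have $\lfloor\frac{3}{2}\omega(G)\rfloor$ colors rather than $\lceil\frac{3}{2}\omega(G)\rceil$ (a ceiling could exceed the bound when $\omega(G)$ is odd), which still suffices because the integer $d_G(v)\le\frac{3}{2}\omega(G)-1$ forces $d_G(v)\le\lfloor\frac{3}{2}\omega(G)\rfloor-1$; and $F$ is not triangle-free (it has $\omega(F)=3$), though only $\chi(F)\le 3$ is actually needed for the lemma.
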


\begin{proof}
We use induction on $|V(G)|$. We may assume that $G$ is connected, for otherwise
we apply the inductive hypothesis on each connected component.
If $G$ contains a clique cutset $K$ that disconnects $H_1$ from $H_2$,
let $G_i=G[H_i\cup K]$ for $i=1,2$. Then the inductive hypothesis implies that
$\chi(G_i)\le \frac{3}{2}\omega(G_i)$ for $i=1,2$. Note that $\chi(G)=\max\{\chi(G_1),\chi(G_2)\}$
and so $\chi(G)\le \frac{3}{2}\omega(G)$. Now $G$ is an atom.
If $G$ contains a universal vertex $u$, then applying the inductive hypothesis to $G-u$
implies that $\chi(G-u)\le \frac{3}{2}\omega(G-u)$. Since $\chi(G)=\chi(G-u)+1$
and $\omega(G)=\omega(G-u)+1$, it follows that $\chi(G)\le \frac{3}{2}(\omega(G)-1)+1< \frac{3}{2}\omega(G)$.
 If $G$ contains a small vertex $v$, then applying
the inductive hypothesis to $G-v$ gives us that $\chi(G-v)\le \frac{3}{2}\omega(G-v)\le \frac{3}{2}\omega(G)$.
Since $v$ has degree at most $\frac{3}{2}\omega(G)-1$, it follows that $\chi(G)=\chi(G')\le \frac{3}{2}\omega(G)$.
It follows then from \autoref{thm:main} that $G$ is  a blow-up of the Petersen graph or $F$.
In other words, the skeleton of $G$ is the Petersen graph or $F$. 
It is straightforward to check that both graphs have chromatic number $3$. Therefore, $\chi(G)\le \frac{3}{2}\omega(G)$
by \autoref{lem:blowup}. This completes our proof.
\end{proof}

\section{A 3/2-Approximation Algorithm}\label{sec:alg}

In this section, we give a polynomial time 3/2-approximation algorithm for coloring $(P_6,C_4)$-free graphs.
The general idea is to decompose the input graph $G$ in the following way to obtain a decomposition tree $T(G)$
where the leaves of $T(G)$ are `basic' graphs which we know how to color and the internal nodes
of $T(G)$ are subgraphs of $G$ that are decomposed via either clique cutsets or small vertices. 
Since both clique cutsets and small vertices `preserve' the colorability of graphs, a bottom-up approach on $T(G)$ will give
us a coloring of $G$ using at most $\frac{3}{2}\omega(G)$ colors.

Formally, let $G$ be a connected $(P_6,C_4)$-free graph.
If $G$ has a clique cutset $K$, then $G-K$ is a disjoint union of two subgraphs $H_1$ and $H_2$ of $G$.
We let $G_i=H_i\cup K$ for $i=1,2$ and decompose $G$ into $G_1$ and $G_2$.
On the other hand, if $G$ does not contain any clique cutset but contains a small vertex $v$
that is not universal, then we decompose $G$ into $G-v$. 
We then further decompose $G_1$ and $G_2$ or $G-v$
in the same way until either the graph has no clique cutsets and no small vertices  or the graph is a clique. 
We refer to these subgraphs that are not further decomposed as {\em strong atoms}.
The decomposition procedure can be represented by a binary tree $T(G)$ whose root is $G$,
and $G$ may have two children $G_1$ and $G_2$ or only one child $G-v$, depending
on the way $G$ is decomposed. Each leaf in $T(G)$ corresponds to a strong atom.
Let $A$ be a strong atom which is not a clique. Note that if $A$ contains a universal vertex $u$,
then $A-u$ is still a strong atom which is not a clique. 
By repeatedly applying this observation and \autoref{thm:main}, it follows that
any strong atom which is not a clique must be the join of a blow-up of the Petersen graph or $F$ and a clique.
It turns out that there are only polynomially many nodes in $T(G)$ and $T(G)$
can be found in polynomial time, see \autoref{lem:T(G)} and \autoref{lem:buildT(G)} below.

\begin{lemma}\label{lem:T(G)}
$T(G)$ has $O(n^2)$ nodes.
\end{lemma}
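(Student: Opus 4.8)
The plan is to bound the number of nodes in $T(G)$ by separately counting the two types of decomposition steps and then bounding the number of leaves. First I would recall the standard fact (attributed to Tarjan and Whitesides) that a clique-cutset decomposition tree of an $n$-vertex graph has only $O(n)$ leaves, because each leaf corresponds to an \emph{atom} and the atoms of a graph number at most $n$. More precisely, decomposing along a clique cutset $K$ into $G_1 = H_1 \cup K$ and $G_2 = H_2 \cup K$ produces two subgraphs whose vertex sets overlap only in $K$, and a charging argument shows the total number of clique-cutset branching nodes is $O(n)$. So if we ignore the small-vertex steps, the tree has $O(n)$ internal branching nodes and $O(n)$ leaves.

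Next I would analyze the small-vertex deletions, which form unary (single-child) chains $G \to G - v$ in the tree. The obvious worry is that a long chain of vertex deletions could have up to $n$ steps, and since such a chain can hang off each of the $O(n)$ atoms produced by the clique-cutset decomposition, one naively gets $O(n)$ chains each of length $O(n)$, for $O(n^2)$ total nodes. I expect this is exactly where the quadratic bound comes from, so the key step is to confirm that (i) each maximal unary chain has length at most $n$ (trivially, since each step removes a vertex and never adds one, so a chain starting from a graph on at most $n$ vertices has at most $n$ nodes), and (ii) the number of distinct maximal chains is $O(n)$, controlled by the $O(n)$ branching nodes of the clique-cutset part.

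The main obstacle will be bookkeeping the interaction between the two operations: after deleting a small vertex we may expose a new clique cutset, so clique-cutset branchings and small-vertex deletions can interleave rather than occurring in two clean phases. To handle this cleanly I would argue that every \emph{branching} node (one with two children) arises from a clique cutset, and charge branching nodes using the classical clique-cutset bound to get $O(n)$ of them; this makes the tree have $O(n)$ leaves and hence $O(n)$ maximal unary paths between consecutive branching/leaf nodes. Since each such unary path consists of small-vertex deletions and therefore has length at most $n$, multiplying gives $O(n) \cdot O(n) = O(n^2)$ total nodes. I would take care to verify that deleting a small vertex from an induced subgraph of a $(P_6,C_4)$-free graph keeps it $(P_6,C_4)$-free and keeps all the structural invariants valid, so that Theorem~\ref{thm:main} applies at each strong-atom leaf and the decomposition is well-defined throughout.

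Finally I would confirm that the recursion genuinely terminates at strong atoms or cliques: at every node we either strictly split the graph (clique cutset, reducing the component sizes) or strictly shrink it by one vertex (small-vertex deletion), so $T(G)$ is finite, and the counting above shows its size is $O(n^2)$.
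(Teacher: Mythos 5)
There is a genuine gap, and it is exactly at the step you flagged as ``standard'': the claim that the clique-cutset part of the tree has only $O(n)$ branching nodes. That bound is \emph{not} a property of arbitrary clique-cutset decomposition trees; it is a property of the particular decomposition produced by Tarjan's algorithm (and of the unique decomposition by clique \emph{minimal} separators, due to Leimer). For freely chosen clique cutsets the number of leaves can be $\Omega(n^2)$, even inside the class studied here. Concretely, let $G_k$ have a clique $\{r_1,\dots,r_k,m_0\}$ together with pairwise non-adjacent vertices $m_1,\dots,m_k$, where $m_i$ is adjacent exactly to $r_i,r_{i+1},\dots,r_k$. This graph is trivially perfect (it is the comparability closure of a caterpillar), hence chordal and $(P_4,C_4)$-free, so in particular $(P_6,C_4)$-free. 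Take the clique cutset $Q=\{r_1,\dots,r_k,m_0\}$ and split $G_k-Q$ into $H_1=\{m_k\}$ and $H_2=\{m_1,\dots,m_{k-1}\}$. The side $Q\cup\{m_k\}$ is a $(k+1)$-clique with one pendant vertex, which a chain of nested prefix cutsets decomposes into $k+1$ clique leaves; the side $Q\cup\{m_1,\dots,m_{k-1}\}$ is $K_1\vee G_{k-1}$, so any decomposition of $G_{k-1}$ lifts to it. Hence the maximum number of leaves satisfies $L(G_k)\ge (k+1)+L(G_{k-1})$, giving $L(G_k)=\Omega(k^2)=\Omega(n^2)$ leaves, all produced by clique-cutset splits alone. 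So ``each leaf is an atom and atoms number at most $n$'' conflates the decomposition-dependent leaves with Leimer's canonical atoms; leaves can repeat and multiply. Moreover, even if one insisted on Tarjan-style cutsets, his $n-1$ bound is proved for one uninterrupted run of his algorithm, whereas the tree $T(G)$ interleaves cutset splits with small-vertex deletions, and no charging argument surviving that interleaving is given in your proposal. Once branching nodes are only known to be $O(n^2)$, your ``(number of chains)$\times$(chain length)'' multiplication yields $O(n^3)$, not $O(n^2)$.

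The paper avoids all of this with a different, self-contained argument that you would need to reproduce (or replace): label each internal node with an \emph{ordered pair of non-adjacent vertices} --- for a cutset node, a vertex from each side of the split; for a deletion node, the deleted small vertex together with a non-neighbor --- and then show, by a lowest-common-ancestor case analysis, that no two internal nodes can receive the same label. This injection into the set of non-adjacent ordered pairs bounds \emph{all} internal nodes (branching and unary alike) by $O(n^2)$ in one stroke, and it is completely indifferent to which clique cutsets are chosen, which is precisely the robustness your approach lacks.
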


\begin{proof}
To see this, it is enough to prove that there are $O(n^2)$ internal nodes in $T(G)$.
We label each internal node $X$ of $T(G)$ with a pair of ordered vertices as follows.
\begin{enumerate}
\item [$\bullet$] If $X$ is decomposed via a clique cutset $K$ into two subgraphs $X_1$ and $X_2$,
then we choose a vertex $a\in X_1-K$ and a vertex $b\in X_2-K$, and label $X$ with $(a,b)$.
\item [$\bullet$] If $X$ is decomposed via removing a small vertex $v$ that is not universal in $X$, 
i.e., $d_X(v)\le \frac{3}{2}\omega(X)-1$,  then we choose a vertex $u$ that is not adjacent to $v$ in $X$ and label $X$ with $(v,u)$. Note that
the choice of $u$ is always possible since $v$ is not universal.
\end{enumerate}

Due to our choice of labeling, if $X$ is labeled with $(x,y)$, then
$x,y\in X$ and $xy\notin E$.
Now we show that no two internal nodes have the same label. Suppose not,
let $A$ and $B$ be two internal nodes  of $T(G)$ that have the same label,
say $(x,y)$. Suppose first that $B$ is a descendant of $A$.
If $A$ is decomposed via a clique cutset into $A_1$ and $A_2$, then
the unique path connecting $A$ and $B$ in the subtree rooted at $A$
goes through either $A_1$ or $A_2$, say $A_1$. Then the fact
that $A$ has label $(x,y)$ implies that $y\notin A_1$.
On the other hand, the fact
that $B$ has label $(x,y)$ implies that $y\in B$.
But this is a contradiction, since $B$ is an induced subgraph of $A_1$.
So, $A$ is decomposed via removing a small vertex.
This means that $A-x$ is the only child of $A$ in $T(G)$ and so
the unique path connecting $A$ and $B$ in the subtree rooted at $A$
goes through $A-x$. Again, the label of $B$ implies that $x\in B$
but this is a contradiction, since $B\subseteq A-x$.

So, we assume that $B$ is not a descendant of $A$. Similarly,
$A$ is not a descendant of $B$.  Let $X$ be the lowest common ancestor
of $A$ and $B$. Then $X$ must be decomposed via a clique cutset $K$ into
two subgraphs $X_1$ and $X_2$,
for otherwise $X-v$ for some $v\in X$ with $d_X(v)<\frac{3}{2}\omega(G)-1$ would have been
a common ancestor that is lower than $X$.  For the same reason, $A$ and $B$
lie in the subtree rooted at $X_1$ and $X_2$, respectively.
As we observed earlier, $x,y\in A\subseteq X_1$ and $x,y\in B\subseteq X_2$.
This implies that $x,y\in X_1\cap X_2=K$. Since $K$ is a clique, $xy\in E$ but
this is a contradiction.

Since there are at most $n^2$ distinct pairs of vertices, the number of internal nodes
is $O(n^2)$.
\end{proof}

\begin{lemma}\label{lem:buildT(G)}
For any graph $G$, $T(G)$ can be found in $O(n^2m)$ time.
\end{lemma}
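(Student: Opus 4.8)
The plan is to leverage \autoref{lem:T(G)}, which already bounds the number of nodes of $T(G)$ by $O(n^2)$: it then suffices to show that each node can be produced with $O(m)$ additional work. Building $T(G)$ reduces to repeatedly performing, on the graph $X$ sitting at the current node, exactly one of two operations: either detect a clique cutset of $X$ and output the two children $G_1,G_2$, or (when $X$ is an atom) locate a universal or small vertex, output the single child $X-v$, and recurse; if neither applies, $X$ is a strong atom and becomes a leaf. So first I would describe, for a single node, how to decide which case occurs and emit the corresponding children, and only afterwards account for the total cost.

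I would dispatch the atom case using \autoref{thm:main}: once $X$ has no clique cutset it has a universal vertex, a small vertex, or is a blow-up of the Petersen graph or of $F$. Detecting a universal vertex, and testing whether $X$ is one of the two exceptional blow-ups (by collapsing twin classes to the skeleton and comparing it against the Petersen graph or $F$), are plainly doable in $O(m)$ time. Identifying a small vertex additionally needs $\omega(X)$, which I would maintain incrementally down the tree rather than recompute: removing a universal vertex decreases $\omega$ by exactly $1$, a clique-cutset split gives $\omega(G)=\max\{\omega(G_1),\omega(G_2)\}$ with each $\omega(G_i)\le\omega(G)$, and deleting a non-universal small vertex changes $\omega$ by at most $1$; since the clique number of a $(P_6,C_4)$-free atom can be read off from its structure, these updates keep the current value of $\omega(X)$ available in $O(m)$ per node, after which comparing each degree against $\tfrac32\omega(X)-1$ finds a small vertex in $O(n)$ time.

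The clique-cutset step is where the real difficulty lies. A single run of a clique--separator decomposition (for instance the $O(nm)$ algorithm of Tarjan) is far too expensive to repeat at all $O(n^2)$ nodes, since that would cost $O(n^3m)$. The idea I would pursue is that the only way a graph with no clique cutset can acquire one is through the deletion of a single small vertex $v$, and any new clique cutset of $X-v$ must separate former neighbours of $v$; hence, instead of rerunning the decomposition globally, I would re-examine only the portion of $X-v$ incident to $N_X(v)$, recomputing a clique cutset (or certifying its absence) in $O(m)$ time by testing the few candidate minimal separators arising from $N_X(v)$. The clique-cutset split nodes are themselves among the $O(n^2)$ internal nodes counted in \autoref{lem:T(G)}, so if each split is confined to the $O(m)$ budget the total remains $O(n^2m)$; summing $O(m)$ over the $O(n^2)$ nodes then yields the bound.

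The main obstacle I anticipate is precisely this localized clique-cutset maintenance: proving rigorously that after deleting $v$ it suffices to inspect separators contained in or adjacent to $N_X(v)$, and that such a separator can be found or ruled out in $O(m)$ time rather than by falling back on a full $O(nm)$ decomposition. Establishing this, together with the incremental upkeep of $\omega(X)$ and of the twin/skeleton data needed to recognize the exceptional blow-ups, is the technical heart of the argument; once it is in place, the node count from \autoref{lem:T(G)} delivers the $O(n^2m)$ running time.
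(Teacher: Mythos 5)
Your overall frame---the $O(n^2)$ node bound from \autoref{lem:T(G)} multiplied by $O(m)$ work per node---matches the paper, but the two places where you depart from the paper's proof are precisely the places where your argument has holes. First, the clique-cutset step: the paper makes no attempt at incremental maintenance; it simply invokes Tarjan's algorithm \cite{Ta85} at every node as a black box that either returns a clique cutset or reports that none exists in $O(m)$ time, and charges this cost to each of the $O(n^2)$ nodes. You instead declare this too expensive and propose a localized re-check around $N_X(v)$ after deleting a small vertex $v$---and then you explicitly concede that proving this localization is sound, and implementable in $O(m)$ time, is ``the main obstacle'' and ``the technical heart.'' A proof whose central step is announced as an unresolved obstacle is not a proof. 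Moreover, your sketch does not even cover all nodes: the two children $G_1,G_2$ of a clique-cutset split must themselves be tested for clique cutsets, and there is no deleted vertex to localize around in that case; and ``testing the few candidate minimal separators arising from $N_X(v)$'' is not an algorithm, since you neither define the candidate set nor bound its size.

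Second, the small-vertex step: your incremental upkeep of $\omega(X)$ does not actually determine $\omega(X)$. After a clique-cutset split you only know $\omega(G_i)\le\omega(G)$, and after deleting a vertex you only know that $\omega$ drops by at most $1$; in both cases the exact value needed to test $d_X(v)\le\frac{3}{2}\omega(X)-1$ remains unknown, and an incorrect value can make the algorithm wrongly classify a node. Your fallback claim that ``the clique number of a $(P_6,C_4)$-free atom can be read off from its structure'' is unsupported: \autoref{thm:main} describes only atoms with no small vertex and no universal vertex (blow-ups of the Petersen graph or $F$); the intermediate atoms arising in the tree admit no such description. The paper sidesteps all of this with a one-line observation you missed: once $X$ is an atom, is not a clique, and (after stripping universal vertices and collapsing twin classes via partition refinement \cite{RT87}) its skeleton is not the Petersen graph or $F$, \autoref{thm:main} guarantees that \emph{some} vertex of $X$ has degree at most $\frac{3}{2}\omega(X)-1$; hence a minimum-degree vertex, found in $O(m)$ time with no knowledge of $\omega(X)$ whatsoever, is small, and it is non-universal because $X$ is not a clique. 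Your instinct that a single clique-cutset detection may cost more than $O(m)$ is not unreasonable, but the paper settles the per-node cost by citation to \cite{Ta85}, whereas your proposed replacement machinery is exactly what you leave unproven; as it stands, the proposal is a plan with its key lemmas missing rather than a proof.
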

\begin{proof}
Given a graph $G$, we explain how to decide in $O(m+n)$ time whehter $G$ can be decomposed.
Since there are $O(n^2)$ nodes in $T(G)$ by \autoref{lem:T(G)}, our lemma follows.
First, we use Tarjan's algorithm \cite{Ta85} to find a clique cutset. The algorithm
either returns a clique cutset if one exists or reports that none exist in $O(m)$ time.
If $G$ does contain some clique cutset, then we decompose $G$ into two subgraphs.
Now $G$ is an atom. We first test if $G$ is a clique and this can be done in $O(n)$ time.
If $G$ is a clique, then we do not decompose $G$ and so $G$ will be a leaf.  We then
partition $G$ into equivalence classes of true twins. This can be done in $O(m+n)$ time \cite{RT87}.
If the skeleton of $G'$, where $G'$ is obtained from $G$ by removing all universal vertices, 
is isomorphic to the Peterson graph or $F$, which can be tested in constant time, 
then we do not decompose $G$. Otherwise, by \autoref{thm:main}, $G$ must contain a small vertex. 
Therefore, a vertex, say $v$, of minimum degree will be such a vertex. Finding such a vertex takes $O(m)$ time.
As $G$ is not a clique, $v$ is not universal.  We thus decompose $G$ into $G-v$.
The total running time is therefore $O(m+n)$.
\end{proof}

We now present our $3/2$-approximation algorithm for coloring $(P_6,C_4)$-free graphs.
\begin{theorem}\label{thm:alg}
There is an $O(n^2m)$ algorithm to find a coloring of $G$ that uses at most $\frac{3}{2}\omega(G)$ colors.
\end{theorem}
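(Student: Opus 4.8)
The plan is to combine the structural decomposition tree $T(G)$ from \autoref{lem:T(G)} and \autoref{lem:buildT(G)} with a bottom-up coloring pass, exactly mirroring the inductive proof of \autoref{thm:main2} but tracking running time at each decomposition step. First I would invoke \autoref{lem:buildT(G)} to construct $T(G)$ in $O(n^2m)$ time; by \autoref{lem:T(G)} this tree has $O(n^2)$ nodes. The algorithm then colors the graphs at the leaves of $T(G)$ (the strong atoms) and propagates colorings upward to the root, which is $G$ itself. The correctness will follow because each decomposition operation preserves colorability within $\frac{3}{2}\omega$ colors, precisely as argued in the proof of \autoref{thm:main2}.

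Next I would specify how each node is colored. At a leaf, the strong atom $A$ is either a clique, which is colored trivially with $\omega(A)$ colors, or, as noted in the discussion preceding \autoref{lem:T(G)}, it is the join of a clique $Q$ with a blow-up of the Petersen graph or of $F$. In the latter case I would 3-color the skeleton (Petersen or $F$, each of chromatic number $3$), lift this to a $\frac{3}{2}\omega$-coloring of the blow-up via the argument of \autoref{lem:blowup}, and then add fresh colors for the universal clique $Q$; this uses at most $\frac{3}{2}\omega(A)$ colors. At an internal node decomposed via a small vertex $v$, I would color $G-v$ recursively and then greedily assign $v$ a color avoiding its at most $\frac{3}{2}\omega(G)-1$ neighbors, so one of the $\lceil\frac{3}{2}\omega(G)\rceil$ colors remains free. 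At an internal node decomposed via a clique cutset $K$ into $G_1$ and $G_2$, I would color $G_1$ and $G_2$ recursively and then, since $K$ is a clique appearing in both, permute the color classes of one child so that the two colorings agree on $K$ before merging them; this is always possible because $\chi(G)=\max\{\chi(G_1),\chi(G_2)\}$ and any proper coloring of a clique can be matched up by relabeling.

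For the running time analysis, I would observe that the total work splits into building $T(G)$, which is $O(n^2m)$ by \autoref{lem:buildT(G)}, and the coloring pass. The coloring pass visits each of the $O(n^2)$ nodes once; at each node the local work (greedily coloring one vertex, testing twin classes and recognizing Petersen/$F$, or relabeling color classes along a clique to merge) is dominated by an $O(m+n)$ computation. Hence the coloring pass is $O(n^2(m+n))=O(n^2m)$ as well, and the overall bound $O(n^2m)$ follows. I would state the number of colors used is at most $\lceil\frac{3}{2}\omega(G)\rceil$, since $\omega(G_i)\le\omega(G)$ for every node and the merge/augment steps never exceed this palette.

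The main obstacle I expect is the clique-cutset merge step: when combining colorings of $G_1$ and $G_2$ that both restrict to $K$, one must argue carefully that a single consistent relabeling exists and can be computed cheaply. The key point is that $K$ is a clique, so any two proper colorings of $K$ use $|K|$ distinct colors on $|K|$ vertices and therefore differ only by a permutation of color labels; extending this permutation to all of the palette of one child yields a global proper coloring that agrees on $K$, and since $K$ is a cutset there are no other edges between $G_1-K$ and $G_2-K$ to worry about. Verifying that this bookkeeping runs within $O(m+n)$ per node, and that the palette size never grows beyond $\lceil\frac{3}{2}\omega(G)\rceil$ across the whole tree, is the delicate part; everything else is a direct algorithmic transcription of the inductive argument already established for \autoref{thm:main2}.
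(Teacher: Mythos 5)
Your proposal is correct and takes essentially the same route as the paper's proof: construct $T(G)$ via \autoref{lem:T(G)} and \autoref{lem:buildT(G)}, color the strong atoms at the leaves (cliques, or joins of a clique with a blow-up of the Petersen graph or $F$, handled via \autoref{lem:blowup}), and propagate colorings to the root by greedy assignment at small-vertex nodes and by permuting color classes so they agree on the clique cutset at two-child nodes, giving $O(n^2)$ nodes with $O(m+n)$ work each. One small remark: since degrees are integers, a small vertex has degree at most $\lfloor\frac{3}{2}\omega(G)\rfloor-1$, so a palette of $\lfloor\frac{3}{2}\omega(G)\rfloor$ colors always has a free color at the greedy step; hence you can state the bound as $\frac{3}{2}\omega(G)$, as in the theorem, rather than the slightly weaker $\lceil\frac{3}{2}\omega(G)\rceil$.
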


\begin{proof}
The algorithm works as follows:
\begin{inparaenum}[(i)]
\item we first find $T(G)$;
\item color each leaf $X$ of $T(G)$ using at most  $\frac{3}{2}\omega(X)$ colors;
\item for an internal node $Y$, if $Y$ has only one child $Y-y$, then color $y$ with a color that is not used on its neighbors
in $Y$; if $Y$ has two children $Y_1$ and $Y_2$, then combine the colorings of $Y_1$ and $Y_2$ on the clique cutset that decomposes $Y$.
\end{inparaenum} 
The correctness follows from \autoref{lem:blowup} and \autoref{thm:main2}.
It takes $O(mn^2)$ time to find $T(G)$ by \autoref{lem:buildT(G)}. 
Moreover, it is easy to see that one can color a leaf $X$ of $T(G)$
in time $O(n+m)$ time, and combining the coloring for a single decomposition step takes $O(n)$ time.
Therefore, it takes $(O(m+n)+O(n))O(n^2)=O(n^2m)$ time to obtain a desired coloring of $G$.
\end{proof}

%\paragraph*{Acknowledgments.}
\noindent {\bf Acknowledgments.}
Serge Gaspers is the recipient of an Australian Research Council (ARC) Future Fellowship (FT140100048)
and acknowledges support under the ARC's Discovery Projects funding scheme (DP150101134).
Shenwei Huang is supported by the National Natural Science Foundation of China (11801284).

We thank one reviewer for his/her careful reading of the paper and many constructive suggestions 
that improve the presentation of the paper greatly.

%\bibliography{main}
%\bibliographystyle{plain}

\end{document}